\documentclass[12pt, reqno]{amsart} 

\usepackage{amsmath, amssymb, amsthm} 
\usepackage{geometry} 
\usepackage{enumitem} 
\usepackage{hyperref} 
\usepackage{xcolor}
\hypersetup{
    colorlinks=true,
    linkcolor=blue,
    citecolor=blue,
    urlcolor=blue
}
\usepackage[backend=biber, style=numeric, sorting=nyt]{biblatex} 
\allowdisplaybreaks

\theoremstyle{plain} 
\newtheorem{theorem}{Theorem}[section]
\newtheorem{lemma}[theorem]{Lemma}
\newtheorem{proposition}[theorem]{Proposition}

\theoremstyle{definition} 
\newtheorem{definition}[theorem]{Definition}

\theoremstyle{remark} 
\newtheorem{remark}[theorem]{Remark}

\begin{document}

\title{Conformal product structures on compact manifolds with constant sectional curvature}

\author{Xianfeng JIANG}
\address{Laboratoire mathématique d'Orsay, Université Paris-Saclay, 91405 Orsay, France}
\email{xianfeng.jiang@mail.ustc.edu.cn}


\subjclass[2020]{Primary 53C18; Secondary 53C35, 53C20}
\keywords{Local conformal product structures, Weyl connections, constant sectional curvature, locally symmetric spaces}

\begin{abstract}
We prove that compact non-flat manifolds of constant sectional curvature admit no conformal product structure. In the flat case, we show that in dimensions at least three every conformal product structure is trivial, namely its Weyl connection is the Levi-Civita connection of the given flat metric; flat surfaces are exceptional. Furthermore, we demonstrate that the methods extend naturally to irreducible, compact locally symmetric spaces of non-positive curvature.
\end{abstract}

\maketitle
\pagestyle{plain}

\section{Introduction}

On every Riemannian manifold $(M,g)$, there is a unique torsion-free linear connection $\nabla$ compatible with the Riemannian metric, called the Levi-Civita connection. More generally, on a conformal manifold $(M,c)$, where $c$ is a conformal class of Riemannian metrics, a torsion-free linear connection $D$ compatible with the conformal structure $c$ is called a Weyl connection. The concept of a Weyl connection was introduced in \cite{weylraumzeitmaterie} in order to unify Einstein's general theory of relativity (gravity) and Maxwell's electromagnetism within a purely geometric framework. 

Recall that by fixing a background metric $g$ in the conformal class $c$, there is a one-to-one correspondence between Weyl connections and 1-forms, i.e., for every Weyl connection $D$, there is a 1-form $\theta$ (called the Lee form), satisfying the formula $Dg=-2\theta\otimes g$, and conversely $D$ is uniquely determined by $\theta$. For a different choice of background metric $g'=e^{2f}g$, the corresponding Lee form is $\theta'=\theta-\text{d}f$. A Weyl connection is called closed (or exact) if its Lee form with respect to a metric $g\in c$ is closed (or exact). This definition is independent of the choice of $g$.

By the classification result of Merkulov-Schwachh{\"o}fer \cite{merkulovschwachhofer1999}, if $D$ is a non-closed Weyl connection on a conformal manifold $(M, c)$ of dimension different from 4, then either the holonomy of $D$ is the full conformal group $CO(n)$, or $D$ is reducible (In this case, we say that $D$ defines a conformal product structure on $(M,c)$). It is thus natural to investigate Weyl connections with reducible holonomy.

Every exact Weyl connection is the Levi-Civita connection of a metric in $c$, so this case reduces to the classical Riemannian geometry. A reducible Weyl connection which is closed but not exact is called a locally conformal product structure (LCP) \cite{flamencourt2024lcp}. The research on the LCP structure began with the counterexample given by Matveev and Nikolayevsky to the Belgun-Moroianu conjecture \cite{matveevnikolayevsky2015counterexample}. A global structure theorem has been obtained by Kourganoff \cite{kourganoff2019similarity}. A number of results on this topic have been obtained \cite{andradadelbarcomoroianu2024lcp}, \cite{delbarcomoroianu2024lie}, \cite{belgunflamencourtmoroianu2025weyl}, \cite{flamencourt2024characteristic}, \cite{moroianupilca2024adapted}. The present paper focuses on non-closed reducible Weyl connections. Before our results, conformal product structures have been studied on Riemannian manifolds with special holonomy: in \cite{belgunflamencourtmoroianu2025weyl}, \cite{MoroianuPilca2025} for the cases of K{\"a}hler manifolds, in \cite{moroianupilca2024einstein} and \cite{conformaleinstein} for the cases of Einstein manifolds, and in \cite{moroianupilca2025reducible} for the case of reducible Riemannian manifolds.

If a Weyl connection on $(M,c)$ is reducible, then the tangent bundle splits into an orthogonal direct sum of two $D$-parallel distributions $\text{T}M=T_1\oplus T_2$. The distributions $T_1$ and $T_2$ are both integrable because $D$ is torsion-free by definition. Recall that for a simply connected manifold, if the holonomy of its Levi-Civita connection is reducible, then the manifold can be decomposed into a Riemannian product of several manifolds according to the decomposition of the holonomy representation. But for conformal manifolds and Weyl connections, this type of decomposition only exists locally. A manifold with conformal product structure can be written locally as a product $M_1\times M_2$ and its Riemannian metric is of the form $e^{2f_1}g_1+e^{2f_2}g_2$, with $g_1,g_2$ Riemannian metrics on $M_1$ and $M_2$ respectively, and $f_1,f_2\in\mathcal{C}^\infty(M_1\times M_2)$, see \cite{belgunmoroianu2011weyl} or \cite{belgunflamencourtmoroianu2025weyl} for details. In the present paper, we prove that in a class of special cases, i.e., compact manifolds with constant sectional curvature and some irreducible symmetric spaces, no conformal product structure exists. 

Our main results are the following:

\begin{theorem}\label{thm1}
Let \((M,g)\) be a compact Riemannian manifold with constant sectional curvature
\(\kappa\). Then:
\begin{enumerate}
    \item If \(\kappa\neq 0\), then \((M,g)\) admits no conformal product structure.
    \item If \(\kappa=0\) and \(n\ge3\), then every conformal product structure has zero Lee form; equivalently, its Weyl connection is \(\nabla^g\).
\end{enumerate}
\end{theorem}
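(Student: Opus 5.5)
The plan is to work with a fixed metric $g$ of constant curvature $\kappa$ and the Lee form $\theta$ of the reducible Weyl connection $D$. We then translate reducibility into pointwise curvature identities, and close the argument by integrating over the compact manifold $M$. Recall that
\[
D_XY=\nabla_XY+\theta(X)Y+\theta(Y)X-g(X,Y)\theta^\sharp .
\]
Let $T\mathrm{M}=T_1\oplus T_2$ be the $D$-parallel splitting, with ranks $n_1+n_2=n$, and let $P$ be the orthogonal projection onto $T_1$. Then $DP=0$, which we rewrite as an explicit formula for $\nabla P$ that is linear in $\theta$ and in $P$. Equivalently, the second fundamental forms of $T_1$ and $T_2$ are umbilic, with mean curvature vectors $-\theta_2^\sharp$ and $-\theta_1^\sharp$ respectively, where $\theta_i$ is the restriction of $\theta$ to $T_i$.

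\textbf{Step 1: pointwise curvature identities.} We decompose the curvature of $D$ as
\[
R^D(X,Y)=W(X,Y)-\big(h^D(X)\wedge Y-h^D(Y)\wedge X\big)+F(X,Y)\,\mathrm{id}.
\]
Here $W$ is the Weyl tensor, which vanishes since $g$ is conformally flat for $n\ge3$. The tensor $h^D$ is the (non-symmetric) Schouten tensor of $D$,
\[
h^D=\tfrac{\kappa}{2}\,g-\nabla\theta+\theta\otimes\theta-\tfrac12|\theta|^2g ,
\]
and $F$ is a multiple of $d\theta$. Reducibility forces $R^D(X,Y)$ to commute with $P$ for all $X,Y$. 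Testing this on $X\in T_1$, $Y\in T_2$, and on pairs inside one factor, we expect three consequences: $h^D$ has no mixed $T_1$--$T_2$ components, $h^D|_{T_1}=a\,g_1$ and $h^D|_{T_2}=b\,g_2$ for functions $a,b$, and a linear relation between $a$ and $b$. When some $n_i=1$, fewer constraints come out, and that case has to be handled separately using the umbilicity from above. Substituting the formula for $h^D$ then turns these into first-order PDEs for $\theta_1,\theta_2$: the symmetric part of $\nabla\theta$ is prescribed up to the trace-free parts on $T_1$ and $T_2$.

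\textbf{Step 2: integration.} We take traces of the resulting equation over $T_1$ and over $T_2$. We rewrite $\mathrm{tr}_{T_i}\nabla\theta$ as a divergence plus terms quadratic in $\theta$, using the formula for $\nabla P$. This yields two scalar identities of the form
\[
\delta(\text{vector field})=c_1\kappa+Q(\theta_1,\theta_2),
\]
with $Q$ a quadratic expression. Integrating over the compact $M$ kills the divergences. For a suitable combination, we aim to obtain $n_1n_2\kappa\,\mathrm{vol}(M)$ equal to a nonpositive integral in one identity and a nonnegative integral in the other. This should force $\kappa=0$ and $\theta_1=\theta_2=0$, except in a degenerate situation. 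If that is not directly achievable, the fallback is a Bochner formula for $|\theta_i|^2$ using $\mathrm{Ric}=(n-1)\kappa g$. In the case $\kappa\neq0$, a nontrivial structure is excluded. In the case $\kappa=0$ with $n\ge3$, the same identities give $\theta\equiv0$, which means $D=\nabla^g$. For $n=2$ the pointwise identities carry no information since $W$ is absent, which is consistent with flat surfaces being exceptional.

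\textbf{Main obstacle.} The hard part is Step 2: finding the right combination of traced identities, or the right auxiliary vector field (something like $|\theta_1|^2\theta_2^\sharp$ or $\theta_1^\sharp-\theta_2^\sharp$), so that all quadratic terms in $\theta$ come with a definite sign after integration. Handling the case where one factor has rank $1$ in Step 1 is a secondary difficulty. There I would first try to use the umbilicity of the leaves together with the Codazzi equation in constant curvature.
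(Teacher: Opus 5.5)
Your Step 1 is sound. For ranks $\ge 2$ it gives more than the paper's contracted identity: $h^D=a\,g(S\cdot,\cdot)$ with $S=2P-\mathrm{Id}$, hence $d\theta=0$ and $\nabla\theta=\tfrac{\kappa}{2}g+\theta\otimes\theta-\tfrac12\|\theta\|^2g-a\,g(S\cdot,\cdot)$. The genuine gap is Step 2 when $\kappa>0$. Writing $\dim T_1=n-r\ge\dim T_2=r$, your two traced identities become
\[
-\delta\theta_1=\tfrac{n-r}{2}(\kappa-2a)-\tfrac{n+r-2}{2}\|\theta_1\|^2+\tfrac{n-r}{2}\|\theta_2\|^2,\qquad -\delta\theta_2=\tfrac{r}{2}(\kappa+2a)+\tfrac{r}{2}\|\theta_1\|^2-\tfrac{2n-r-2}{2}\|\theta_2\|^2 .
\]
Nothing controls the sign of $a$, so the only combination that integrates to a usable statement is the $a$-free one. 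That combination is exactly the paper's identity, obtained there from $\sum_{i,j}\langle (R_{e_j,e_i}S)e_i,Se_j\rangle$:
\[
r(r-1)\int_M\|\theta_1\|^2+(n-r)(n-r-1)\int_M\|\theta_2\|^2=\kappa\, r(n-r)\,\mathrm{Vol}(M).
\]
For $\kappa<0$, and for $\kappa=0$ with $r\ge2$, your plan therefore reproduces the paper's argument. For $\kappa>0$, however, both sides are positive, and no identity of the opposite sign comes out of the traces. Your Bochner fallback is not spelled out and would still involve $a$.

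Pointwise information alone cannot settle this case, since $S^n\setminus S^{r-1}$ is conformal to $S^{n-r}\times H^r$. The paper uses global input. For $r\ge2$ it has $d\theta=0$; Myers' theorem makes the universal cover a round $S^n$ on which $\theta$ is exact. So $D$ is the Levi-Civita connection of a conformal metric with reducible holonomy, and de Rham's theorem contradicts the homology of $S^n$. For $r=1$ it runs a maximum principle for the basic function $\theta(\xi)$ along the leaves of $\xi^\perp$, reaching $\kappa+\theta(\xi)^2=0$.

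A possible repair inside your framework for $r\ge2$: the second Bianchi identity gives $da=2a\theta$. Either $a\equiv0$, and then $d\|\theta\|^2=(\kappa+\|\theta\|^2)\theta$ forces $\theta\equiv0$ and hence $\kappa=0$. Or $a$ never vanishes, and at a maximum of $|a|$ one has $\theta=0$ and $\nabla\theta=\tfrac{\kappa}{2}g-a\,g(S\cdot,\cdot)\le0$, forcing $\kappa\le0$.

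There are two further holes. First, part (1) covers compact surfaces with $\kappa\neq0$, which you dismiss by saying the identities carry no information when $n=2$. In fact, in dimension $2$ the requirement that $R^D$ commute with $P$ is precisely $\operatorname{tr}_g h^D=0$, i.e.\ $\delta\theta=-\kappa$. This integrates to $\kappa\,\mathrm{Vol}(M)=0$; the paper instead uses Poincar\'e--Hopf and Gauss--Bonnet. Second, in rank one the coefficient $r(r-1)$ vanishes, so for $\kappa=0$ the integral identity only gives $\theta_2=0$, i.e.\ $\theta(\xi)=0$, not $\theta\equiv0$ as you claim. The component $\theta_0$ of $\theta$ on $\xi^\perp$ must still be killed. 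The paper does this by using $\nabla_X\xi=-\theta(\xi)X+\langle X,\xi\rangle\theta^\sharp$ to derive $\tfrac12\theta_0(\|\theta_0\|^2)=\|\theta_0\|^2\bigl(\kappa-\xi(\theta(\xi))+\|\theta\|^2\bigr)$. Here this reduces to $\tfrac12\theta_0(\|\theta_0\|^2)=\|\theta_0\|^4$, and evaluating at a maximum of $\|\theta_0\|^2$ finishes the argument.
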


\begin{theorem}\label{thm2}
    Let $(M,g)$ be a compact irreducible locally symmetric space with non-positive sectional curvature. Then there is no conformal product structure on $(M,g)$. 
\end{theorem}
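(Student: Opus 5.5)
Write $D=\nabla^g+\tilde\theta$, where $\tilde\theta_X Y=\theta(X)Y+\theta(Y)X-g(X,Y)\theta^\sharp$ and $\theta$ is the Lee form of $D$ with respect to the locally symmetric metric $g$. Let $TM=T_1\oplus T_2$ be the $D$-parallel splitting, with orthogonal projections $P_1,P_2$ and ranks $n_1,n_2\ge 1$.

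The plan is to convert $D$-parallelism of the splitting into a first-order equation for the projection $P_1$ with respect to $\nabla=\nabla^g$. Since $D_X P_1=0$, we get
\[
\nabla_X P_1=[P_1,\tilde\theta_X],
\]
which is explicitly linear in $\theta$. Concretely, $\nabla_X P_1$ maps $T_1\to T_2$ by $U\mapsto -\theta_2(U)X_2\ldots$, where $\theta=\theta_1+\theta_2$ is split along the decomposition and $X_i=P_iX$. The result should be the standard formula: the second fundamental forms of both foliations are umbilic, given by $g(U,V)\,\theta_2^\sharp$ for $U,V\in T_1$, and symmetrically for $T_2$.

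Differentiating once more, we compute $R(X,Y)P_1=[R(X,Y),P_1]$ in terms of $\nabla\theta$ and quadratic terms in $\theta$. Since $M$ is locally symmetric, $\nabla R=0$, so we can also differentiate this curvature identity and use $\nabla_Z R=0$. Equivalently, we use the Weyl curvature of $D$: the splitting forces $R^D$ to preserve $T_1$ and $T_2$, and $R^D=R^g+(\text{terms in }\nabla\theta,\ \theta\otimes\theta,\ d\theta)$.

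Next we compare $R^g(X,Y)$ for $X\in T_1$, $Y\in T_2$ with the algebraic constraint that $[R^g(X,Y),P_1]$ is expressed through $\theta$. In an irreducible symmetric space of non-compact type, the curvature operators $R(X,Y)$ span the holonomy algebra $\mathfrak{k}$, which acts irreducibly. The equation $[R(X,Y),P_1]=(\text{terms in }\theta)$ for all $X,Y$ should force a rank condition. Specifically, $P_1$ would be $\mathfrak{k}$-invariant modulo a perturbation of low rank, built from $\theta$ via $\theta\wedge$ operators. We then argue pointwise that this is impossible unless $\theta$ has a special form, for instance $\theta\in T_1$ with $n_1=1$, or similar degenerate cases. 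The flat case (Euclidean, reducible) is excluded by irreducibility; presumably the paper treats "irreducible" as excluding the flat case.

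Finally, we use compactness. From the pointwise relations we expect to derive an equation of the form
\[
\delta\theta=c\,|\theta|^2-\mathrm{Scal}\text{-type terms}\ge 0,
\]
more precisely a Bochner-type identity such as
\[
\mathrm{div}(\text{vector field built from }\theta,P_i)=a|\theta_1|^2+b|\theta_2|^2+(\text{Ricci terms})
\]
with all terms of one sign, since the Ricci curvature is $\le 0$ (Einstein with negative constant). Integrating over compact $M$ then forces $\theta=0$. With $\theta=0$, $D=\nabla^g$ is reducible, which contradicts the irreducibility of the holonomy (de Rham). The same scheme is what we expect to have been used for Theorem \ref{thm1}; here the Einstein property replaces constant curvature.

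The main obstacle is the algebraic step: showing that curvature of non-compact type prevents a non-invariant splitting compatible with $\theta$-corrections, and extracting a signed integrand. We would first try to trace the curvature identity over $X\in T_1$, $Y\in T_2$. This traced identity should give
\[
\mathrm{div}_{T_1}\theta_2+\mathrm{div}_{T_2}\theta_1\sim \sum R(e_i,f_j,f_j,e_i)+(\text{quadratic in }\theta).
\]
The mixed sectional curvature sum is $\le 0$ and is strictly negative somewhere by irreducibility, since otherwise the splitting would be curvature-flat in the mixed directions. Integration then yields the contradiction.
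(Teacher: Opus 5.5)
Your last paragraph names the mechanism the paper actually uses: trace the curvature identity over mixed pairs and integrate. (The paper's $H=\sum_{i,j}\langle R_{e_j,e_i}(S)e_i,Se_j\rangle$ is $-4$ times the mixed scalar curvature $\sum_{e_i\in T_1,\,e_j\in T_2}K(e_i,e_j)$.) But the identity everything rests on is left as ``$\sim$'' and ``(quadratic in $\theta$)'', and as written it is not right. By the umbilicity you found, $\sum_{e_i\in T_1}\langle\nabla_{e_i}\theta_2,e_i\rangle=-n_1\|\theta_2\|^2$, which is algebraic rather than a divergence, so it does not integrate away. The paper obtains the correct identity by contracting \eqref{centre} with $SX$ and using the formula for $\operatorname{tr}(ST)$; it is also Walczak's formula for two umbilic foliations:
\[
\sum_{e_i\in T_1,\ e_j\in T_2}K(e_i,e_j)=-\delta\bigl(n_2\theta_1+n_1\theta_2\bigr)+n_2(n_2-1)\|\theta_1\|^2+n_1(n_1-1)\|\theta_2\|^2 .
\]
The proof works only because these coefficients are non-negative while the left side is $\le 0$ by non-positive \emph{sectional} curvature. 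Your Ricci/Einstein remark is beside the point. Integrating gives $\theta=0$ when $\min(n_1,n_2)\ge 2$, and then de Rham finishes as you say.

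In rank one, say $n_2=1$, the coefficient of $\|\theta_1\|^2$ vanishes. Integration gives at most $\theta_2=0$ together with identical vanishing of the mixed curvature, so ``integration forces $\theta=0$'' fails. The contradiction must come from the claim you only assert, that the mixed curvature cannot vanish identically. ``Otherwise the splitting would be curvature-flat in the mixed directions'' is just its negation, and in a general non-positively curved manifold $K(X,Y)=0$ does not even imply $R(X,Y)=0$.

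This is exactly where local symmetry is needed. We have $K(X,Y)=\frac1\alpha B([X,Y],[X,Y])$ with $\alpha>0$ and $B$ negative definite on $\mathfrak k$. So $K(e_i,e_n)=0$ for all $i<n$ forces $[e_i,e_n]=0$, i.e.\ $0\ne e_n\in Z_{\mathfrak p}(\mathfrak p)$. By Jacobi this space is $\operatorname{ad}(\mathfrak k)$-invariant, hence a non-zero abelian ideal of the semisimple algebra $\mathfrak g$, a contradiction.

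The same bracket computation in any rank shows that $[\mathfrak p_1,\mathfrak p_2]=0$ would make $\mathfrak p_1$ invariant under $[\mathfrak p,\mathfrak p]$, contradicting irreducibility. Once proved, this would give you a uniform argument with no need for $\theta=0$ or de Rham. The middle part of your plan ($\nabla R=0$, ``$P_1$ is $\mathfrak k$-invariant up to low-rank $\theta$-terms'') is not needed and is not an argument as it stands.
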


The rank of a conformal product structure is defined as the smallest of the ranks of the two distributions $T_1$ and $T_2$. Our argument can be divided into two cases: the rank $1$ case and the higher-rank cases, which are proved independently. In the case of rank $1$, we denote by \(\xi\) a unit vector field generating the rank-one distribution, which exists up to taking a double cover of $M$. The core of our method lies in obtaining an equation by calculating the curvature tensor $R\xi$ through two approaches, one arising from the relation between $\nabla\xi$ and $\theta$, and the other from the simple expression of Riemannian curvature on manifolds with constant sectional curvature. For the cases of rank $\ge2$, we use the involution operator $S$ introduced in \cite{MoroianuPilca2025}, which is the linear operator with $T_1$ and $T_2$ as its eigen-subspaces corresponding to the eigenvalues 1 and -1 respectively. We replace the vector $\xi$ by $S$, and the handling of this case is also divided into two parts: the case of positive (sectional) curvature and of non-positive curvature. We finally show that the method for non-positive curvature can be generalized to compact irreducible locally symmetric spaces.

\subsection*{Acknowledgments} The author would like to thank Andrei Moroianu for his guidance and support. The author is supported by a doctoral contract from Université Paris-Saclay, through the Programme blanc GS Mathématiques and the École Doctorale de Mathématiques Hadamard (EDMH).

\section{Preliminaries}
Let $(M,g)$ be a Riemannian manifold of dimension $n\ge2$. We denote by $\nabla$ the Levi-Civita connection of $g$ and by $\sharp:\text{T}^*M\rightarrow\text{T}M$ and $\flat:\text{T}M\rightarrow \text{T}^*M$ the musical isomorphisms defined by $g$, which are $\nabla$-parallel and inverse to each other. We shall freely identify 1-forms and vector fields using the metric \(g\). In order to simplify notation, we will sometimes simply write $\langle \cdot,\cdot\rangle$ instead of the metric $g$, and denote the associated norm by $\|\cdot\|$.

\begin{definition}
A Weyl connection on $(M, g)$ is a torsion-free linear connection $D$ satisfying $Dg = -2\theta \otimes g$ for some 1-form $\theta \in \Omega^1(M)$ called the Lee form of $D$ with respect to $g$.
\end{definition}

The following conformal analogue of the Koszul formula \cite{gauduchon1995weyleinstein}, shows that the Lee form uniquely determines the Weyl connection:
\begin{equation}\label{Koszul}
    D_X\xi=\nabla_X\xi+\theta(\xi)X+\theta(X)\xi-\langle X,\xi\rangle\theta^\sharp.
\end{equation}

\begin{definition}
A conformal product structure on $(M,g)$ consists of a Weyl connection $D$ together with a decomposition of the tangent bundle of $M$ as $\text{T}M=T_1\oplus T_2$, where $T_1$ and $T_2$ are orthogonal $D$-parallel non-trivial distributions. The rank of a conformal product structure is defined to be the smallest of the ranks of the two distributions $T_1$ and $T_2$. A conformal product structure $D$ on $(M,g)$ is said to be orientable if the $D$-parallel distributions $T_1$ and $T_2$ are orientable. Up to a finite cover, every conformal product structure is orientable.
\end{definition}

We now record two tensorial descriptions of conformal product structures which will be used below, according to whether the rank is \(r=1\) or \(r\ge2\).

\subsection{Conformal product structure of rank $r=1$}

\begin{lemma}\label{rank1 chara}
    If there exists a conformal product structure on $(M,g)$ of rank $1$, then after passing to a double cover if necessary, there exists a vector field $\xi$ such that $|\xi|=1$ and 
    \begin{equation}\label{rank-one-xi}
\nabla_X\xi=-\theta(\xi)X+\theta^\sharp\langle X,\xi\rangle.
    \end{equation}
\end{lemma}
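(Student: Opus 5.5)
Without loss of generality $T_1$ has rank one. The rank-one distribution $T_1$ is a real line bundle. If it is non-orientable, we pass to the orientation double cover of $T_1$. There the pulled-back line bundle is trivial, so it admits a global unit section $\xi$ with $|\xi|=1$ and $T_1=\mathbb{R}\xi$. The pulled-back metric, Weyl connection and splitting still form a conformal product structure, since all conditions are local. So we may assume $\xi$ is globally defined.

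The next step is to use $D$-parallelism of $T_1$. Since $T_1$ is $D$-parallel, $D_X\xi\in T_1$, so $D_X\xi=\alpha(X)\xi$ for some 1-form $\alpha$. To determine $\alpha$, differentiate $g(\xi,\xi)=1$ with $D$. Using $Dg=-2\theta\otimes g$:
\[0=X(g(\xi,\xi))=(D_Xg)(\xi,\xi)+2g(D_X\xi,\xi)=-2\theta(X)+2\alpha(X),\]
hence $\alpha=\theta$ and $D_X\xi=\theta(X)\xi$.

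Finally we substitute into the Koszul formula \eqref{Koszul}:
\[\nabla_X\xi=D_X\xi-\theta(\xi)X-\theta(X)\xi+\langle X,\xi\rangle\theta^\sharp=-\theta(\xi)X+\langle X,\xi\rangle\theta^\sharp,\]
which is \eqref{rank-one-xi}.

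The only step needing care is the double cover. We should note that the Lee form $\theta$ and the Weyl connection pull back to the cover, so \eqref{rank-one-xi} holds on the cover with the pulled-back data. The rest is a direct computation.
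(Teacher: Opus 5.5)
Your proof is correct and follows the paper's argument. You pass to a double cover to get a unit section $\xi$, write $D_X\xi=\alpha(X)\xi$, identify $\alpha=\theta$, and substitute into the Koszul formula. The only cosmetic difference is that you get $\alpha=\theta$ by differentiating $g(\xi,\xi)=1$ with $Dg=-2\theta\otimes g$, whereas the paper pairs the Koszul formula with $\xi$ (implicitly using $\langle\nabla_X\xi,\xi\rangle=0$).
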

\textbf{Proof.} Assume that there is a conformal product structure of rank $1$, i.e., a Weyl connection $D$ and a $D$-parallel decomposition $\text{T}M=\text{T}_1\oplus \text{T}_2$, with $1=\text{dimT}_1\le\text{dimT}_2$. Then, by replacing $M$ with a double cover if necessary, we can assume that the rank-$1$ vector bundle $T_1$ is oriented, hence it has a global section $\xi$ of unit norm, i.e., $\text{T}M=\mathbb{R}\xi\oplus\xi^\bot$.

Since $D$ preserves the decomposition $\text{T}M=\mathbb{R}\xi\oplus\xi^\bot$, there is a  1-form $\alpha$ on $M$ such that $D_X\xi=\alpha(X)\xi$ for every $X\in\text{T}M$. With the Koszul formula \eqref{Koszul}, we have 
\begin{equation}
    \alpha(X)=\langle D_X\xi,\xi\rangle=\theta(\xi)\langle X,\xi\rangle+\theta(X)\langle\xi,\xi\rangle -\langle X,\xi\rangle\theta(\xi)=\theta(X),\qquad \forall X\in\text{T}M,
\end{equation}
thus showing that $\alpha=\theta$. Hence, $\theta(X)\xi=\alpha(X)\xi=D_X\xi=\nabla_X\xi+\theta(\xi)X+\theta(X)\xi-\langle X,\xi\rangle\theta^\sharp$, thus for $\forall X\in\text{T}M$, we have: $
\nabla_X\xi=-\theta(\xi)X+\theta^\sharp\langle X,\xi\rangle$, which completes the proof of the lemma.\qed

\subsection{The involution associated with a conformal product structure.}

For any two vector fields $X,Y\in\Gamma(\text{T}M)$, we define the symmetric endomorphism:
\[
(X\odot Y)(Z):=\langle X,Z\rangle Y+\langle Y,Z\rangle X,\quad \forall Z\in\Gamma(TM),
\]
and we denote by $T:=\nabla\theta$ the bilinear form defined by the covariant derivative of the Lee form $\theta$ as well as the endomorphism determined by the metric: $g(TX,Y)=T(X,Y)=\langle \nabla_X\theta,Y\rangle$. For any 1-form \(\alpha\), we use the convention $\delta\alpha=-\sum_{i=1}^n(\nabla_{e_i}\alpha)(e_i)$, where \(\{e_i\}\) is any local
orthonormal frame. In particular,
\begin{equation}\label{trT}
\operatorname{tr}T=\sum_{i=1}^n g(Te_i,e_i)=\sum_{i=1}^n(\nabla_{e_i}\theta)(e_i)=-\delta\theta.
\end{equation}

A conformal product structure can be characterized in terms of an involution of $\text{T}M$ as follows; cf. \cite[Lemma 2.4]{MoroianuPilca2025}.

\begin{lemma}\label{tensorialcharacterization}
    On a Riemannian manifold $(M,g)$ with Levi-Civita connection $\nabla$ the following
assertions are equivalent:

(i) There exists a conformal product structure on $(M,g)$.

(ii) There exists a g-orthogonal involution $S$ of $\text{T}M$ different from $\pm\text{Id}_{\text{T}M}$ and a 1-form $\theta$ on $M$, such that
\begin{equation}\label{nabla S}
\nabla_X S=SX\odot\theta^\sharp-S\theta^\sharp\odot X,\ \ \forall X\in\Gamma(\text{T}M).
\end{equation}
\end{lemma}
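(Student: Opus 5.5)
We prove Lemma \ref{tensorialcharacterization} by matching $S$ with the splitting $\text{T}M=T_1\oplus T_2$: given a splitting, let $S$ be the endomorphism equal to $\text{Id}$ on $T_1$ and $-\text{Id}$ on $T_2$; conversely, given $S$, let $T_1,T_2$ be its $\pm1$ eigenbundles. Since $S$ is a $g$-orthogonal involution, it is symmetric, so the eigenbundles are orthogonal, and $S\neq\pm\text{Id}$ makes both nontrivial. The distributions are $D$-parallel exactly when $DS=0$. The lemma therefore reduces to one claim: for a Weyl connection $D$ with Lee form $\theta$, written as in \eqref{Koszul}, we have $DS=0$ if and only if \eqref{nabla S} holds.

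\textbf{Step 1: compute $D_XS$.} Write \eqref{Koszul} as $D_X=\nabla_X+A_X$ with
\[
A_X Y=\theta(Y)X+\theta(X)Y-\langle X,Y\rangle\theta^\sharp .
\]
Then $(D_XS)=(\nabla_XS)+[A_X,S]$. The term $\theta(X)\,\text{Id}$ in $A_X$ commutes with $S$, so only the part $Y\mapsto \theta(Y)X-\langle X,Y\rangle\theta^\sharp$ contributes; this is the skew endomorphism $X\wedge\theta$, up to sign convention. For $Y$, using $S=S^*$, $S^2=\text{Id}$ and the orthogonality of $S$,
\[
[A_X,S]Y=\theta(SY)X-\langle X,SY\rangle\theta^\sharp-\theta(Y)SX+\langle X,Y\rangle S\theta^\sharp .
\]
By definition $(SX\odot\theta^\sharp)(Y)=\langle SX,Y\rangle\theta^\sharp+\theta(Y)SX$ and $(S\theta^\sharp\odot X)(Y)=\langle S\theta^\sharp,Y\rangle X+\langle X,Y\rangle S\theta^\sharp$. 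Using $\langle X,SY\rangle=\langle SX,Y\rangle$ and $\theta(SY)=\langle S\theta^\sharp,Y\rangle$, this gives
\[
[A_X,S]=-(SX\odot\theta^\sharp)+(S\theta^\sharp\odot X).
\]
Hence
\[
D_XS=\nabla_XS-\bigl(SX\odot\theta^\sharp-S\theta^\sharp\odot X\bigr).
\]
It follows immediately that $DS=0$ if and only if \eqref{nabla S} holds.

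\textbf{Step 2: the two implications.} For (i)$\Rightarrow$(ii), take $S$ as above and $\theta$ the Lee form of $D$. Then $DS=0$, so \eqref{nabla S} holds by Step 1. For (ii)$\Rightarrow$(i), define $D$ from $\theta$ by \eqref{Koszul}. This $D$ is torsion-free, since $A_XY$ is symmetric in $X,Y$. It satisfies $Dg=-2\theta\otimes g$, since the skew part of $A_X$ preserves $g$ and the part $\theta(X)\text{Id}$ produces the conformal factor. So $D$ is a Weyl connection. By Step 1 it satisfies $DS=0$, so the eigenbundles of $S$ are $D$-parallel, orthogonal and nontrivial, which is a conformal product structure.

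\textbf{Main obstacle.} The only real difficulty is the sign bookkeeping in Step 1, namely checking that the commutator gives exactly $-(SX\odot\theta^\sharp-S\theta^\sharp\odot X)$ and not the opposite sign or a transposed version. I would verify it on the test case $X,Y\in T_1$ and $\theta\in T_2$, where the $T_2$-component of $D_XY$ must vanish.
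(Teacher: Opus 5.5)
Your proof is correct. The commutator identity $[A_X,S]=-(SX\odot\theta^\sharp-S\theta^\sharp\odot X)$ checks out term by term, and the connection built from \eqref{Koszul} is indeed torsion-free with $Dg=-2\theta\otimes g$. The only tacit point is that $M$ is connected, so that $\operatorname{tr}S$ is constant and $S\neq\pm\mathrm{Id}$ gives eigenbundles of constant positive rank.

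The paper gives no proof of its own and simply quotes \cite[Lemma 2.4]{MoroianuPilca2025}. Your argument, that the splitting is $D$-parallel iff $DS=0$ together with $D_XS=\nabla_XS+[A_X,S]$, is the standard direct way to prove that cited result.
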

For a conformal product structure, i.e., a Weyl connection $D$ and a $D-$parallel decomposition $\text{T}M=T_1\oplus T_2$, the involution $S$ is defined by $S(X_1+X_2)=X_1-X_2$, $\forall X_1\in T_1$ and $X_2\in T_2$. We assume $\text{dim}T_1=n-r$ and $\text{dim}T_2=r$ from now on, where $r$ is the rank of the conformal product structure, i.e., $\text{dim}T_1\ge\text{dim}T_2$, thus $\text{tr}S=n-2r$. We denote by \(\theta_i\) the restriction, equivalently the component, of \(\theta\) on \(T_i\). Thus \(\|\theta\|^2=\|\theta_1\|^2+\|\theta_2\|^2\) and $\langle \theta,S\theta\rangle=\|\theta_1\|^2-\|\theta_2\|^2$.

For later use, we recall the following action of the Riemannian curvature tensor on $S$ obtained in [\cite{MoroianuPilca2025}, Equation (7)], which follows directly from \eqref{nabla S} applied twice to vector fields $X,Y\in\Gamma(\text{T}M)$:
\begin{equation}\label{RS}
\begin{aligned}
    R_{X,Y}S=&SY\odot TX-SX\odot TY+STY\odot X-STX\odot Y\\ &+\langle\theta,Y\rangle(SX\odot\theta-S\theta\odot X)+\langle\theta,X\rangle(S\theta\odot Y-SY\odot\theta)\\ &-\|\theta\|^2(SX\odot Y-SY\odot X).
\end{aligned}
\end{equation}

By applying Equation \eqref{RS} to some vector field $Z\in\text{T}M$, we obtain:
\begin{equation}\label{RSapptoZ}
\begin{aligned}
(R_{X,Y}S)Z=&\langle SY,Z\rangle TX+\langle TX,Z\rangle SY-\langle SX,Z\rangle TY-\langle TY,Z\rangle SX\\
    &+\langle STY,Z\rangle X+\langle X,Z\rangle STY-\langle STX,Z\rangle Y-\langle Y,Z\rangle STX\\
    &+\langle\theta,Y\rangle \langle SX,Z\rangle\theta+\langle\theta,Y\rangle \theta(Z)SX-\theta(Y)\langle S\theta,Z\rangle X-\theta(Y)\langle X,Z\rangle S\theta\\
    &+\theta(X)\langle S\theta, Z\rangle Y+\theta(X)\langle Y,Z\rangle S\theta-\theta(X)\langle SY,Z\rangle\theta-\theta(X)\theta(Z)SY\\
    &-\|\theta\|^2\langle SX,Z\rangle Y-\|\theta\|^2\langle Y,Z\rangle SX+\|\theta\|^2\langle SY,Z\rangle X+\|\theta\|^2\langle X,Z\rangle SY.
\end{aligned}
\end{equation}

Taking $Y=Z=e_i$, summing over a local orthonormal basis $e_i$ yields:
\begin{align*}
    \sum_{i=1}^n(R_{X,e_i}S)e_i=&(trS)TX+STX-TSX-(trT)SX+tr(ST)X\\
    &+STX-STX-nSTX+\theta(SX)\theta+\|\theta\|^2SX\\
    &-(\|\theta_1\|^2-\|\theta_2\|^2)X-\theta(X)S\theta+\theta(X)S\theta+n\theta(X)S\theta-(trS)\theta(X)\theta\\
    &-\theta(X)S\theta-\|\theta\|^2SX-n\|\theta\|^2SX+\|\theta\|^2tr(S)X+\|\theta\|^2SX.
\end{align*}
After simplification and using \eqref{trT}, we obtain the following equation for $\forall X\in\text{T}M$:
\begin{equation}\label{centre}
\begin{aligned}
    \sum_{i=1}^n(R_{X,e_i}S)e_i=&(n-2r)TX-TSX+(\delta\theta)SX+tr(ST)X\\
    &-(n-1)STX+\theta(SX)\theta-(\|\theta_1\|^2-\|\theta_2\|^2)X\\
    &+(n-1)\theta(X)S\theta-(n-2r)\theta(X)\theta\\
    &-(n-1)\|\theta\|^2SX+\|\theta\|^2(n-2r)X.
\end{aligned}
\end{equation}

We will need the following simple lemma to compute $R_{X,Y}S$ directly:

\begin{lemma}\label{trivial}
For $\forall X,Y,Z\in\Gamma(\text{T}M)$, 
\[
(R_{X,Y}S)Z=R_{X,Y}(SZ)-S(R_{X,Y}Z).
\]
\end{lemma}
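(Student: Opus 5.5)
This identity follows from the Leibniz rule, once we note that the curvature operator $R_{X,Y}=[\nabla_X,\nabla_Y]-\nabla_{[X,Y]}$ acts as a derivation on tensor fields. So the plan is to expand $R_{X,Y}$ acting on the endomorphism $S$ and on its evaluation $SZ$, and compare. Throughout, $R_{X,Y}S$ denotes the action of the curvature of $\nabla$ on $\operatorname{End}(\mathrm{T}M)$. That is, $R_{X,Y}S=\nabla_X\nabla_YS-\nabla_Y\nabla_XS-\nabla_{[X,Y]}S$, where $\nabla$ is the connection induced on endomorphisms.

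The first step is the Leibniz rule for the induced connection on $\operatorname{End}(\mathrm{T}M)$. For any vector field $W$ and endomorphism $A$,
\[
(\nabla_WA)Z=\nabla_W(AZ)-A(\nabla_WZ).
\]
Applying this twice gives
\[
(\nabla_X\nabla_YS)Z=\nabla_X\nabla_Y(SZ)-(\nabla_YS)(\nabla_XZ)-(\nabla_XS)(\nabla_YZ)-S(\nabla_X\nabla_YZ).
\]
Here the middle terms come from rewriting $\nabla_X\big((\nabla_YS)Z\big)$ and $(\nabla_YS)(\nabla_XZ)$ via the first-order rule. The two middle terms are symmetric under exchanging $X$ and $Y$.

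The second step is to antisymmetrize in $X,Y$. The symmetric middle terms cancel, and we are left with
\[
([\nabla_X,\nabla_Y]S)Z=[\nabla_X,\nabla_Y](SZ)-S([\nabla_X,\nabla_Y]Z).
\]
Subtracting the first-order identity for $W=[X,Y]$ then yields
\[
(R_{X,Y}S)Z=R_{X,Y}(SZ)-S(R_{X,Y}Z).
\]

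There is no real obstacle here; the only care needed is bookkeeping in the double Leibniz expansion. As a more conceptual alternative, one can note that the curvature of the induced connection on $\operatorname{End}(\mathrm{T}M)\cong \mathrm{T}^*M\otimes \mathrm{T}M$ is $R_{X,Y}$ acting as a derivation. Its action on $S$ is therefore the commutator $[R_{X,Y},S]$, which is exactly the claimed formula.
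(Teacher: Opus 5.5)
Your proposal is correct and follows the paper's proof: both expand $(\nabla_X\nabla_Y S)Z$ with the Leibniz rule, antisymmetrize in $X,Y$ so that the mixed terms $(\nabla_X S)(\nabla_Y Z)+(\nabla_Y S)(\nabla_X Z)$ cancel, and then subtract the first-order identity for $\nabla_{[X,Y]}$. Your closing remark, that the induced curvature acts on $\operatorname{End}(\mathrm{T}M)$ as the commutator $[R_{X,Y},S]$, is a valid conceptual restatement of the same computation.
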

\textbf{Proof:}
\[
    (R_{X,Y}S)Z=((\nabla_X\nabla_Y-\nabla_Y\nabla_X-\nabla_{[X,Y]})S)(Z).
\]
We compute the three terms on the right hand side. The first term is:
\begin{align*}
    (\nabla_X\nabla_YS)Z&=\nabla_X((\nabla_Y S)Z)-(\nabla_Y S)(\nabla_XZ)\\
    &=\nabla_X\nabla_Y(SZ)-\nabla_X(S(\nabla_YZ))-(\nabla_YS)(\nabla_XZ)\\
    &=\nabla_X\nabla_Y(SZ)-(\nabla_XS)(\nabla_YZ)-S(\nabla_X\nabla_YZ)-(\nabla_YS)(\nabla_XZ).
\end{align*}
By exchanging $X$ and $Y$, we obtain the second term:
\[
    (\nabla_Y\nabla_XS)Z=\nabla_Y\nabla_X(SZ)-(\nabla_YS)(\nabla_XZ)-S(\nabla_Y\nabla_XZ)-(\nabla_XS)(\nabla_YZ).
\]
The third term is:
\[
    (\nabla_{[X,Y]}S)Z=\nabla_{[X,Y]}(SZ)-S(\nabla_{[X,Y]}Z).
\]
Adding the above three equations together yields:
\begin{equation*}
\begin{aligned}
   (R_{X,Y}S)Z&= (\nabla_X\nabla_YS-\nabla_Y\nabla_XS-\nabla_{[X,Y]}S)Z\\
   &=R_{X,Y}(SZ)-S(R_{X,Y}Z).\\
   \end{aligned}
\end{equation*} 
This concludes the proof.\qed

\section{Main Results}

In this section, we present the proofs of our main results, Theorem \ref{thm1} and \ref{thm2}, which were stated in the Introduction.

\subsection{The case of constant sectional curvature surfaces}

\begin{proposition}\label{prop:surface-case}
Let \((M,g)\) be a compact connected surface of non-zero constant sectional curvature. Then \((M,g)\) admits no conformal product structure.
\end{proposition}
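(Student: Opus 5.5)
The plan is to avoid curvature computations altogether and use a topological obstruction instead. In dimension $2$ any conformal product structure has $\dim T_1=\dim T_2=1$, so it has rank $1$. By Lemma \ref{rank1 chara}, after passing to a double cover $\widetilde M$ of $M$ if necessary, there is a global unit vector field $\xi$. The pulled-back metric on $\widetilde M$ is again compact with the same constant curvature $\kappa\neq 0$. Passing to a further double cover if needed, I will also assume $\widetilde M$ is orientable. Then $\widetilde M$ is a compact connected (or a disjoint union of such) oriented surface of curvature $\kappa$ carrying a nowhere-vanishing vector field.

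The key steps, in order, are these.
\begin{enumerate}
\item By the Poincar\'e--Hopf theorem, a compact surface with a nowhere-vanishing vector field has Euler characteristic $0$. This applies to each component of $\widetilde M$.
\item By the Gauss--Bonnet theorem applied to each component $N$,
\[
\kappa\,\mathrm{Area}(N)=\int_N K\,dA=2\pi\chi(N)=0.
\]
\item Since $\mathrm{Area}(N)>0$, this forces $\kappa=0$, contradicting the hypothesis $\kappa\neq0$.
\end{enumerate}

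There is no real obstacle. The only point needing care is the bookkeeping of covers: we must make sure that the covering preserves compactness and constant curvature $\kappa$, and that the vector field $\xi$ lifts or already lives on the cover. All three hold, since finite covers of compact manifolds are compact and local isometries preserve sectional curvature.

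If a more computational argument in the spirit of the rest of the paper were preferred, one could instead write $\theta=a\xi+b\eta$ with $\eta$ the unit vector orthogonal to $\xi$. From \eqref{rank-one-xi} one gets $\nabla_\xi\xi=b\eta$ and $\nabla_\eta\xi=-a\eta$. Computing $R_{\xi,\eta}\xi=-\kappa\eta$ from these formulas and integrating the resulting divergence identity would yield $\kappa\,\mathrm{Area}=0$. This is, however, just Gauss--Bonnet in disguise, so I would present the topological argument.
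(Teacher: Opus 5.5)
Your proposal is correct and follows the paper's proof essentially verbatim: pass to a finite orientable cover on which a line distribution is orientable, deduce $\chi=0$ from Poincar\'e--Hopf, and contradict $\kappa\neq0$ via Gauss--Bonnet. Your handling of the covers, applying the argument componentwise, is fine.
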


\begin{proof}



Assume that \((M,g)\) admits a conformal product structure. Since \(\dim M=2\),
both distributions have rank one. After passing to a finite cover which is
orientable and on which one of the line distributions is orientable, we obtain
a compact orientable surface carrying a nowhere vanishing vector field. Hence its
Euler characteristic is zero by the Poincaré--Hopf theorem.

On the other hand, the lifted metric has constant Gaussian curvature
\(\kappa\neq0\). By Gauss--Bonnet,
\[
2\pi\chi(\widetilde M)
=
\int_{\widetilde M}\kappa\,d\mathrm{vol}_{\tilde g}
=
\kappa\,\mathrm{Vol}(\widetilde M)
\neq0,
\]
a contradiction.
\end{proof}

\begin{remark}
Notice that Theorem \ref{thm1} makes no assertion about flat surfaces. Now we give a converse example in this case: let $T^2=\mathbb R^2/\mathbb Z^2$ be endowed with its standard flat metric, and let \(E_1,E_2\) be the global parallel orthonormal frame induced by the coordinate vector fields. We define the parallel complex structure \(J\) by $J E_1 = E_2$ and $J E_2 = -E_1$. Let \(f\in C^\infty(T^2)\), and set
\[
        \xi := \cos f\, E_1 + \sin f\, E_2, \qquad
        \eta := J\xi .
\]
Then \(\xi,\eta\) form a global orthonormal frame of \(T T^2\), and hence define an orthogonal splitting $\text{T}T^2 = \mathbb R \xi \oplus \mathbb R \eta$. Since \(E_1,E_2\) are parallel and \(J\) is parallel, one has $\nabla_X \xi = X(f)\eta$ for every vector field \(X\):
\[
        \begin{aligned}
        \nabla_X\xi&=\nabla_X(\cos fE_1+\sin fE_2)\\
        &=X(\cos f)E_1+X(\sin f)E_2\\
        &=-\sin fX(f)E_1+\cos fX(f)E_2\\
        &=X(f)(-\sin fE_1+\cos fE_2)=X(f)\eta.\\
        \end{aligned}
\]
Applying $J$ to both sides of the equation, we obtain $\nabla_X\eta=-X(f)\xi$, because \(J\) is parallel and \(\eta=J\xi\). Now define the Lee form \(\theta\) by $\theta^\sharp := J\nabla f$. Writing \(X=u\xi+v\eta\), we have $\theta^\sharp=\xi(f)\eta-\eta(f)\xi$, therefore $\theta(\xi)=-\eta(f)$ and $\theta(\eta)=\xi(f)$.  Moreover, using the Koszul formula \eqref{Koszul}, we first compute:
\[
        D_X\xi=X(f)\eta-\eta(f)(u\xi+v\eta)+\theta(X)\xi-u\bigl(\xi(f)\eta-\eta(f)\xi\bigr)=\theta(X)\xi .
\]
Indeed, the \(\eta\)-component cancels because
\[
        X(f)=u\xi(f)+v\eta(f).
\]
Similarly,
\[
        D_X\eta=-X(f)\xi+\xi(f)(u\xi+v\eta)+\theta(X)\eta-v\bigl(\xi(f)\eta-\eta(f)\xi\bigr)=\theta(X)\eta .
\]
Hence $D_X\xi\in \mathbb R\xi$ and $D_X\eta\in \mathbb R\eta$ for every vector field \(X\). Thus both line distributions $\mathbb R\xi$ and $\mathbb R\eta$ are \(D\)-parallel, hence the splitting $\text{T}T^2=\mathbb R\xi\oplus\mathbb R\eta$ defines a conformal product structure.

Moreover, in the coordinates \(x,y\) corresponding to \(E_1,E_2\), we have $\theta = -f_y\,dx+f_x\,dy$, and consequently
\[
        d\theta = (f_{xx}+f_{yy})\,dx\wedge dy .
\]
Thus \(\theta\) is not closed in general. 
\end{remark}

\subsection{The case of constant sectional curvature in dimension $\ge 3$}
\begin{theorem}\label{3.1}
    Let $(M,g)$ be a compact manifold with constant sectional curvature of dimension $n\ge 3$. Then $(M,g)$ admits no conformal product structure $D$, unless $g$ is flat and $D=\nabla^g$.
\end{theorem}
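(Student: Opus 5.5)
We treat separately the case of rank $r=1$ and the case $r\ge2$. In both cases we compute the curvature acting on the structure tensor ($\xi$, resp.\ $S$) in two ways: once from the structure equation, once from $R_{X,Y}Z=\kappa(\langle Y,Z\rangle X-\langle X,Z\rangle Y)$. We then integrate the resulting pointwise trace identities over the compact manifold $M$.

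\emph{Rank one.} By Lemma \ref{rank1 chara} we may assume that there is a unit vector field $\xi$ with \eqref{rank-one-xi}. Put $a:=\theta(\xi)$ and $T=\nabla\theta$. Differentiating \eqref{rank-one-xi} once more should give
\[
R_{X,Y}\xi=Y(a)X-X(a)Y+\bigl(\langle X,\xi\rangle\theta(Y)-\langle Y,\xi\rangle\theta(X)\bigr)\theta+\langle Y,\xi\rangle TX-\langle X,\xi\rangle TY .
\]
We compare this with $\kappa(\langle Y,\xi\rangle X-\langle X,\xi\rangle Y)$ in two cases.
\begin{itemize}
\item Taking $X,Y\perp\xi$ linearly independent, which is possible since $n\ge3$, shows that $da$ vanishes on $\xi^\perp$.
\item Taking $Y=\xi$ and $X\perp\xi$ gives $TX=(\kappa-\xi(a))X+\theta(X)\theta$ on $\xi^\perp$.
\end{itemize}
Next, differentiating $a=\theta(\xi)$ gives $\xi(a)=T(\xi,\xi)-a^2+\|\theta\|^2$. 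Inserting this into the trace yields the pointwise identity
\[
\operatorname{tr}T=(n-1)\kappa-(n-2)\,\xi(a).
\]
From \eqref{rank-one-xi} we get $\operatorname{div}\xi=-(n-1)a$, so $\int_M\xi(a)=(n-1)\int_M a^2$. By \eqref{trT}, $\int_M\operatorname{tr}T=0$. Integrating the trace identity therefore gives
\[
(n-1)\kappa\,\mathrm{Vol}(M)=(n-1)(n-2)\int_M a^2 .
\]
Hence $\kappa<0$ is impossible, and $\kappa=0$ forces $a\equiv0$.

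In the flat case we then have $\nabla_X\xi=\langle X,\xi\rangle\theta$, $T(\xi,\xi)=-\|\theta\|^2$ and $T|_{\xi^\perp}=\theta\otimes\theta$. To conclude $\theta=0$, we will exploit the remaining component of $T$, namely $T(X,\xi)$ for $X\perp\xi$, read off from the same curvature identity, together with the integral of $\operatorname{div}(\|\theta\|^2\xi)$ or of $\operatorname{div}(\nabla_\theta\theta)$. The aim is an integral identity of the form $\int_M\|\theta\|^4=0$.

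\emph{Rank $\ge2$.} We use Lemma \ref{tensorialcharacterization}. By Lemma \ref{trivial} and the constant curvature form of $R$, the left-hand side of \eqref{centre} equals $\kappa\bigl((n-2r)X-nSX\bigr)$. Equating this with \eqref{centre} gives an endomorphism identity involving $T$, $ST$, $TS$, $\theta$ and $S\theta$. We split it into its $T_1\times T_1$, $T_2\times T_2$ and mixed blocks. Taking the partial traces on $T_1$ and on $T_2$ gives two scalar identities relating $\operatorname{tr}(T|_{T_i})$, $\|\theta_i\|^2$ and $\kappa$. We integrate these using $\int_M\delta\theta=0$ and the divergences of $\theta_1,\theta_2$; the latter are computable from \eqref{nabla S}, since $\theta_i=\frac12(\theta\pm S\theta)$. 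The goal is $\kappa\,\mathrm{Vol}(M)=-c\int_M\|\theta\|^2$ with $c>0$ in the negative case, and an identity of the opposite sign in the positive case. Flatness then again forces $\theta=0$, hence $D=\nabla^g$.

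\emph{Main obstacle.} I expect the positive curvature case to be the main obstacle: in rank one the integral identity above is compatible with $\kappa>0$. My first attempt is to lift to $S^n$, where $\pi_1$ is finite so compactness is preserved, and extract a second, independent integral identity. One source is the full Ricci identity $\mathrm{Ric}(\xi)=(n-1)\kappa\xi$ combined with the Bochner formula for the unit field $\xi$. Another is to run the rank-$\ge2$ computation with $S=2\xi\otimes\xi-\mathrm{Id}$, which also satisfies \eqref{nabla S}, so that \eqref{centre} supplies additional constraints.
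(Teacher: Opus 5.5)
Your rank-one computations are correct, and they are more direct than the paper's route through $S$. The identities $\operatorname{tr}T=(n-1)\kappa-(n-2)\xi(a)$ and $\operatorname{div}\xi=-(n-1)a$ give exactly the case $r=1$ of \eqref{int result}. This excludes $\kappa<0$ and forces $a=0$ when $\kappa=0$.

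The flat ending you leave open is immediate and does not need $T(X,\xi)$. Since $a=0$, we have $\theta\perp\xi$, so $T|_{\xi^\perp}=\theta\otimes\theta$ gives $\nabla_\theta\theta=\|\theta\|^2\theta$. As $\operatorname{tr}T\equiv0$, you get $\operatorname{div}(\nabla_\theta\theta)=2\|\theta\|^4$; alternatively, argue at a maximum of $\|\theta\|^2$, as the paper does.

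For $r\ge2$ and $\kappa\le0$ your plan is the paper's, but your target sign is reversed. What one obtains is \eqref{int result}, $\kappa r(n-r)\mathrm{Vol}(M)=r(r-1)\int_M\|\theta_1\|^2+(n-r)(n-r-1)\int_M\|\theta_2\|^2$. It is the \emph{positive} sign here that rules out $\kappa<0$.

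The genuine gap is $\kappa>0$, in both ranks, and the tools you propose cannot close it. Your two partial traces over $T_1$ and $T_2$ are not independent. Their sum is the full trace of \eqref{maineq}, and both sides of \eqref{maineq} are identically traceless: by pair symmetry, $\sum_{i,j}\langle (R_{e_j,e_i}S)e_i,e_j\rangle=0$. So only the $S$-weighted trace survives. That is \eqref{H}, which integrates to \eqref{int result} again: an identity of one fixed sign, perfectly compatible with $\kappa>0$. No ``identity of the opposite sign'' exists at this level.

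Your rank-one remedies return the same identity. The integrated Bochner formula reads $\int_M\operatorname{Ric}(\xi,\xi)=\int_M\bigl((\operatorname{div}\xi)^2-\operatorname{tr}(\nabla\xi)^2\bigr)=(n-1)(n-2)\int_M a^2$, and with $\operatorname{Ric}(\xi,\xi)=(n-1)\kappa$ this is what you already have. The $S$-computation with $S=2\xi\otimes\xi-\mathrm{Id}$ is just the case $r=1$ of \eqref{int result}. This is unsurprising: open subsets of the round sphere do carry conformal product structures (e.g.\ $S^n\setminus S^k$ is conformal to $H^{k+1}\times S^{n-k-1}$). So the contradiction must come from a global argument that uses more of the structure than one traced identity.

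The paper uses two further ingredients.

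For $r\ge2$, it uses the skew-symmetric part of \eqref{maineq}, from which $\kappa$ drops out. This part, and the corresponding part of $S\circ{}$\eqref{maineq}, are split into $S$-commuting and $S$-anticommuting pieces. This gives a linear system with determinant $(n-2)^2-(n-2r)^2\neq0$, hence $d\theta=0$. On the universal cover, which is compact by Myers and diffeomorphic to $S^n$, $\theta$ is then exact. De Rham would then split $S^n$ as a product, which K\"unneth forbids.

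For $r=1$, it uses a leafwise maximum principle.
\begin{enumerate}
\item Since $a$ is constant on the leaves of $\xi^\perp$ (as you showed), $da=0$ on the closure $\overline L$ of the leaf through an extremum of $a$; in particular $\xi(a)=0$ there.
\item Your formula for $T|_{\xi^\perp}$, together with $\theta_0(a)=0$, gives $\frac12\theta_0(\|\theta_0\|^2)=\|\theta_0\|^2(\kappa-\xi(a)+\|\theta\|^2)$.
\item At a maximum of $\|\theta_0\|^2$ on the saturated set $\overline L$, the left side vanishes. This forces $\theta_0=0$ on $L$.
\item Then $TX=\nabla_X(a\xi)=-a^2X$ for $X$ tangent to $L$, while your formula gives $TX=\kappa X$ there. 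Hence $\kappa=-a^2$, a contradiction.
\end{enumerate}
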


\textbf{Proof.} We denote the sectional curvature of $(M,g)$ by $\kappa$. The proof splits into two cases: the case of $\kappa> 0$ and of $\kappa\le 0$.

\textbf{Case i)} $\kappa>0$.
We first deal with the case where the rank is $1$.

\begin{proposition}\label{prop2} Let $M$ be a compact Riemannian manifold of constant sectional curvature $\kappa>0$ with $\text{dim}M\ge 3$. Then $M$ admits no conformal product structures of rank $1$.
\end{proposition}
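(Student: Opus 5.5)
We work on the double cover provided by Lemma \ref{rank1 chara}. There we have a unit vector field $\xi$ with $\nabla_X\xi=-a X+\langle X,\xi\rangle\theta^\sharp$, where $a:=\theta(\xi)$. The plan is to compute $R_{X,Y}\xi$ in two ways and compare.

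First, differentiating the identity of Lemma \ref{rank1 chara} once more and antisymmetrizing gives
\[
R_{X,Y}\xi=Y(a)X-X(a)Y+\bigl(\langle X,\xi\rangle\theta(Y)-\langle Y,\xi\rangle\theta(X)\bigr)\theta^\sharp+\langle Y,\xi\rangle TX-\langle X,\xi\rangle TY .
\]
Second, constant curvature gives $R_{X,Y}\xi=\kappa(\langle Y,\xi\rangle X-\langle X,\xi\rangle Y)$. Equating the two expressions yields three pieces of information.

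\begin{enumerate}
\item For $X,Y\perp\xi$ linearly independent, which is possible since $n\ge3$, we get $X(a)=0$ for all $X\perp\xi$. Hence $da=\xi(a)\,\xi^\flat$.
\item For $Y=\xi$ and $X\perp\xi$, we get $TX=(\kappa-\xi(a))X+\theta(X)\theta^\sharp$ on $\xi^\perp$, together with $T(X,\xi)=a\,\theta(X)$.
\item The derivative of $a$ along $\xi$ is $\xi(a)=T(\xi,\xi)+\|\theta\|^2-a^2$.
\end{enumerate}

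Next we would extract integral identities. We have $\operatorname{div}\xi=-(n-1)a$, so $\int_M\xi(a)=(n-1)\int_M a^2$. Integrating $\operatorname{tr}T=-\delta\theta$ gives $\int_M\operatorname{tr}T=0$. Inserting the expressions above for $\operatorname{tr}T$, this gives
\[
\int_M\bigl[(n-1)\kappa-(n-1)(n-2)a^2\bigr]=0 .
\]
The Bochner formula for $\xi$ gives the same identity. Indeed $\nabla\xi=-a\,\mathrm{Id}+\theta\otimes\xi^\flat$, so $\operatorname{tr}(\nabla\xi)^2=(n-1)a^2$ and $(\operatorname{div}\xi)^2=(n-1)^2a^2$. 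Thus the global integral information alone only yields $\int_M a^2=\kappa\,\mathrm{Vol}/(n-2)$, which is not yet a contradiction. A second, independent relation is needed.

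For that relation we would exploit the geometry of the foliation $\xi^\perp$. From $d\xi^\flat=\xi^\flat\wedge\theta$, the distribution $\xi^\perp$ is integrable. Its leaves are totally umbilic hypersurfaces of the space form with mean curvature $-a$, and $a$ is constant on each leaf. Applying $d(da)=0$ to $da=\xi(a)\xi^\flat$ gives
\[
\xi^\flat\wedge\bigl(d(\xi(a))-\xi(a)\theta\bigr)=0 .
\]
So $b:=\xi(a)$ satisfies $X(b)=b\,\theta(X)$ for $X\perp\xi$. Combined with the formula for $T$ on $\xi^\perp$, this should give a Riccati-type equation for $a$ along $\xi$, of the shape $\xi(a)=a^2+\kappa+(\text{terms controlled by }\theta_\perp)$.

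We would then evaluate this at a maximum point $p$ of $a$ on the compact manifold. There $da=0$, so $\xi(a)=0$. The aim is to conclude that $\kappa+a^2$ must be non-positive at $p$, contradicting $\kappa>0$. Alternatively, we would integrate the Riccati identity weighted by suitable powers of $a$ to get a second integral identity incompatible with $\int_M a^2=\kappa\,\mathrm{Vol}/(n-2)$.

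The main obstacle is precisely this second identity: controlling $T(\xi,\xi)$, equivalently the $\theta_\perp$-terms, which the curvature comparison leaves undetermined. The first thing we would try is to differentiate the relation $T(X,\xi)=a\theta(X)$ once more and use the Ricci identity for $\theta$ together with constant curvature. This should pin down $T(\xi,\xi)$ and close the system.
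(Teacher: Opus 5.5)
Your computations are all correct: the formula for $R_{X,Y}\xi$, items 1--3, the relation $X(\xi(a))=\xi(a)\theta(X)$ on $\xi^\perp$, and $\int_M a^2=\kappa\,\mathrm{Vol}(M)/(n-2)$. But the argument stops where the proof has to happen, and there is a genuine gap. The ``second, independent relation'' is never produced, so no contradiction is reached.

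Moreover, the pointwise Riccati equation you hope for does not exist. Write $\theta=\theta_0+a\xi^\flat$ and restrict item 2 to a leaf $L$ of $\xi^\perp$. This gives $\nabla^L_X\theta_0^\sharp=(\kappa+a^2-\xi(a))X+\theta_0(X)\theta_0^\sharp$, whose trace is
\[
\kappa+a^2-\xi(a)=\tfrac{1}{n-1}\bigl(\operatorname{div}_L\theta_0-\|\theta_0\|^2\bigr).
\]
So the ``$\theta_\perp$-terms'' contain a leafwise derivative of $\theta_0$. In particular $T(\xi,\xi)=\xi(a)-\|\theta_0\|^2$ is not an algebraic function of $a$ and $\theta$.

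Local models on the round sphere ($\kappa=1$) show this concretely:
\begin{enumerate}
\item Take $g=\frac{4(dt^2+|dx|^2)}{(1+t^2+|x|^2)^2}$ with the splitting $\mathbb{R}\partial_t\oplus\mathbb{R}^{n-1}$. One finds $a=t$, $\|\theta_0\|^2=|x|^2$ and $\xi(a)=\frac12(1+t^2+|x|^2)$. At $x=0$ you have $\theta_0=0$ but $\xi(a)=\frac12(\kappa+a^2)$.
\item Take the cylinder splitting of $\frac{1}{\cosh^2t}(dt^2+g_{S^{n-1}})$. Here $\theta_0\equiv0$ and $\xi(a)=\kappa+a^2$.
\end{enumerate}
Hence differentiating $T(X,\xi)=a\theta(X)$ and using the Ricci identity cannot pin down $T(\xi,\xi)$. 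Likewise, evaluating at a maximum $p$ of $a$ gives $\kappa+a^2=0$ only if you already know that $\theta_0$ vanishes on a neighbourhood of $p$ in its leaf.

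The missing idea is a maximum principle on the closure of a leaf, and it uses only your item 2. Pair item 2 with $\theta$ and take $X=\theta_0^\sharp$; since $X(a)=0$ on $\xi^\perp$, this gives
\[
\tfrac12\,\theta_0\bigl(\|\theta_0\|^2\bigr)=\|\theta_0\|^2\bigl(\kappa-\xi(a)+\|\theta\|^2\bigr).
\]
Let $L$ be the leaf through a maximum point of $a$. Then $\xi(a)=0$ on $L$, since every point of $L$ is a maximum (your relation for $\xi(a)$ also gives this). By continuity $\xi(a)=0$ on the compact set $\overline L$, which is invariant under the flow of the leaf-tangent field $\theta_0^\sharp$. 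At a maximum of $\|\theta_0\|^2$ on $\overline L$ the left side vanishes. The right side there equals $\|\theta_0\|^2(\kappa+\|\theta\|^2)$ with $\kappa>0$, so $\theta_0\equiv0$ on $\overline L$.

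Now take $X$ tangent to $L$. On one hand $TX=\nabla_X(a\xi)=-a^2X$. On the other hand item 2 gives $TX=\kappa X$. So $\kappa=-a^2$, a contradiction. This is exactly how the paper concludes.
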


\textbf{Proof.}
Suppose, by contradiction, that \(M\) admits a rank-one conformal product structure. Passing to a double cover if necessary, we may assume that the rank-one distribution is orientable. This does not affect the argument, since a finite cover of a compact manifold of constant sectional curvature \(\kappa>0\) is again compact and has the same constant sectional curvature. By Lemma \ref{rank1 chara}, there exists a global unit vector field \(\xi\) spanning the rank-one
distribution such that $\nabla_X\xi=-\theta(\xi)X+\theta^\sharp\langle X,\xi\rangle$ for every \(X\in TM\).

We first consider the case of $\text{dim}M\ge 3$. A Riemannian manifold $(M,g)$ has constant sectional curvature $\kappa$ if and only if
\begin{equation}\label{curvature}
    R(X,Y)Z=\kappa(g(Z,Y)X-g(Z,X)Y)
\end{equation}
Using \eqref{rank-one-xi} twice, we compute: 
\begin{align*}
    R_{X,Y}\xi&=\nabla_X\nabla_Y\xi-\nabla_Y\nabla_X\xi-\nabla_{[X,Y]}\xi\\
    &=-X(\theta(\xi))Y+Y(\theta(\xi))X+TX\langle Y,\xi\rangle-TY\langle X,\xi\rangle+\theta^\sharp\langle Y,\nabla_X\xi\rangle-\theta^\sharp\langle X,\nabla_Y\xi\rangle
\end{align*}
Applying \eqref{curvature} to $Z=\xi$, we have:
\begin{equation}\label{11}
\begin{aligned}
    0=&\kappa\langle Y,\xi\rangle X-Y(\theta(\xi))X-\kappa\langle X,\xi\rangle Y+X(\theta(\xi))Y-TX\langle Y,\xi\rangle \\
    &+TY\langle X,\xi\rangle -\theta^\sharp\langle Y,\nabla_X\xi\rangle +\theta^\sharp\langle X,\nabla_Y\xi\rangle.
\end{aligned}
\end{equation}
For $Y=\xi$, we get:
\[
    0=\kappa X-\xi(\theta(\xi))X-\kappa\langle X,\xi\rangle\xi+X(\theta(\xi))\xi-TX+T\xi\langle X,\xi\rangle +\theta^\sharp\langle X,\nabla_\xi\xi\rangle.
\]
We denote $X'=X-\langle X,\xi\rangle\xi$, so that $\langle X',\xi\rangle=0$. The previous equation then reads:
\begin{equation}\label{12}
    TX'=\kappa X'-\xi(\theta(\xi))X+X(\theta(\xi))\xi+\theta^\sharp\langle X,\nabla_\xi\xi\rangle.
\end{equation}
Taking the scalar product with $\theta$ in this equation yields:
\begin{equation}\label{9}
    \frac12X'\|\theta\|^2=\kappa\theta(X')-\xi(\theta(\xi))\theta(X)+X(\theta(\xi))\theta(\xi)+\langle X,\nabla_\xi\xi\rangle\|\theta\|^2.
\end{equation}
Consider the decomposition of $\theta=\theta_0+a\xi^\flat$, where $\theta_0$ is the  restriction of $\theta$ on $\xi^\bot$, and $a=\theta(\xi)$. We have $\|\theta\|^2=\|\theta_0\|^2+a^2$ and  $\theta(X)=\theta_0(X')+a\langle X,\xi\rangle$. Equation \eqref{9} can be written as:
\begin{equation}
\frac12X'(\|\theta_0\|^2+a^2)=\kappa\theta_0(X')-\xi(a)\theta(X)+X(a)a+\langle X',\nabla_\xi\xi\rangle\|\theta\|^2
\end{equation}
or, equivalently: 
\begin{align*}
    \frac12X'\|\theta_0\|^2&=\kappa\theta_0(X')-\xi(a)\theta(X)+\langle X,\xi\rangle\xi(a)a+\langle X',\nabla_\xi\xi\rangle\|\theta\|^2\\
    &=\kappa\theta_0(X')-\xi(a)\theta(X')+\langle X',\nabla_\xi\xi\rangle\|\theta\|^2
\end{align*}
because $-\theta(X)+\langle X,\xi\rangle a=-\theta(X')\ \  \forall X'\in\xi^\bot$. We thus obtained:
\begin{equation}\label{cle}
    \frac12 \pi(\text{d}\|\theta_0\|^2)=\kappa\theta_0-\xi(a)\theta_0+\nabla_\xi\xi\|\theta\|^2
\end{equation}
where $\pi$ is the natural projection $TM\rightarrow\xi^\bot$.

A direct computation yields:
\[
\nabla_\xi\xi=-\theta(\xi)\xi+\theta^\sharp\langle\xi,\xi\rangle=-\theta(\xi)\xi+\theta^\sharp=\theta_0,\quad \nabla_{\theta^\sharp}\xi=-\theta(\xi)\theta^\sharp+\theta^\sharp\theta(\xi)=0.
\]
If $X,Y\in\xi^\bot$, then Equation \eqref{11} becomes:
\begin{equation}\label{XY bot xi}
    0=-Y(a)X+X(a)Y-\theta^\sharp\langle Y,\nabla_X\xi\rangle+\theta^\sharp\langle X,\nabla_Y\xi\rangle,
\end{equation}
and $\nabla_X\xi=-\theta(\xi)X+\theta^\sharp\langle X,\xi\rangle=-\theta(\xi)X$. So by \eqref{XY bot xi}, we have: 
\begin{equation}
    0=-Y(a)X+X(a)Y+\theta^\sharp\langle Y,\theta(\xi)X\rangle-\theta^\sharp\langle X,\theta(\xi)Y\rangle=-Y(a)X+X(a)Y.
\end{equation}
Thus $Y(a)X=X(a)Y$, for every $X,Y\in\xi^\bot$. Since we have assumed that $M$ is of dimension $n\ge 3$, we can take $X$ and $Y$ to be linearly independent, we obtain $Y(a)=X(a)=0$, which means
\begin{equation}\label{a-basic}
X(a)=0,\qquad \forall X\in\xi^\bot.
\end{equation}
Now we assume $X\bot \xi$ so that $X'=X$. Then Equation \eqref{12} can be written as: 
\begin{equation}\label{TX}
    TX=\kappa X-\xi(a)X+X(a)\xi+\theta^\sharp\langle X,\nabla_\xi\xi\rangle=\kappa X-\xi(a)X+\theta^\sharp\theta_0(X).
\end{equation}
The scalar product of \eqref{cle} with $\theta_0$ is:
\begin{equation}\label{theta0}
    \frac12\theta_0(\|\theta_0\|^2)=\|\theta_0\|^2(\kappa-\xi(a)+\|\theta\|^2).
\end{equation}
Since $X(a)=0$ for $\forall X\bot\xi$, $a$ is a basic function of the foliation $F$ associated with the distribution $\xi^\bot$, i.e., the restriction of $a$ on an arbitrary leaf of $F$ is constant.  Let \(x\in M\) be a maximum or minimum point of \(a\). Since \(a\) is constant along the leaf \(L_x\), every point of \(L_x\) is again a maximum or minimum point of \(a\). Hence \(da=0\) on \(L_x\), and in particular
\(\xi(a)=0\) on \(L_x\). By continuity, \(\xi(a)=0\) on \(\overline{L_x}\). Since \(\theta_0^\sharp\) is tangent to the leaves of \(\xi^\perp\), the closure \(\overline{L_x}\) is invariant under the local flow of \(\theta_0^\sharp\). Let \(y\in\overline{L_x}\) be a maximum point of \(\|\theta_0\|^2\) on \(\overline{L_x}\) (such a point exists because $\overline{L_x}$ is compact). Then $\theta_0(\|\theta_0\|^2)(y)=0$. Hence \eqref{theta0} gives $0=\|\theta_0\|^2(y)(\kappa+\|\theta\|^2(y))$. Since \(\kappa>0\), this implies \(\|\theta_0\|^2(y)=0\), therefore \(\theta_0\equiv0\) on \(\overline{L_x}\), which means that $\theta=a\xi^\flat$ on $L_x$. 


Moreover, since we have assumed $X\bot\xi$, i.e., $X$ is tangent to $L_x$, we know that $\nabla_X \theta=\nabla_X(a\xi)$ on $L_x$.

With \eqref{rank-one-xi} and \eqref{a-basic} we know at each point of $L_x$, that:
\begin{equation}
TX=\nabla_X\theta=\nabla_X(a\xi)=X(a)\xi+a\nabla_X\xi=-a^2X.
\end{equation}
Using \eqref{TX}, and the fact that $\theta_0\equiv 0$ on $L_x$ we get: 
\begin{equation}
    -a^2X=TX=\kappa X-\xi(a)X+\theta^\sharp\theta_0(X)=\kappa X-\xi(a)X.
\end{equation}
for every $X\in\xi^{\perp}$.

Then $\kappa+a^2-\xi(a)=0$, at points maximum or minimum of $a$ , $\xi(a)=0$, so $\kappa+a^2=0$, which contradicts the assumption that $\kappa>0$. This completes the proof of Proposition \ref{prop2}
\qed

We now consider the case $\kappa>0$ and the rank of conformal product structure $\ge2$.

As Equation \eqref{centre} holds for any $X\in\text{T}M$, we can rewrite it as an identity in $\text{End}(\text{T}M)$ as follows:
\begin{equation}\label{maineq}
\begin{aligned}
    \kappa[(n-2r)\operatorname{Id}-nS]=&(n-2r)T-TS+(\delta\theta)S+\operatorname{tr}(ST)\operatorname{Id}\\
    &-(n-1)ST+\theta\otimes S\theta-(\|\theta_1\|^2-\|\theta_2\|^2)\operatorname{Id}\\
    &+(n-1)S\theta\otimes \theta-(n-2r)\theta\otimes\theta\\
    &-(n-1)\|\theta\|^2S+(n-2r)\|\theta\|^2\operatorname{Id}.
\end{aligned}
\end{equation}
 The endomorphisms of $\text{T}M$ can be decomposed into a symmetric part and a skew-symmetric part with respect to the metric $g$ as follows. For any $U\in\text{End}(\text{T}M)$, we denote by $U^t$ its adjoint with respect to $g$. Then:  
\[
U=\frac12(U+U^t)+\frac12(U-U^t),
\]
where the first term is the symmetric part and the second term is the skew-symmetric part. This decomposition is unique, since an endomorphism which is both symmetric and skew-symmetric must be zero, i.e. the vector space of symmetric endomorphisms and that of skew-symmetric endomorphisms are orthogonal.

We denote the skew-symmetric part of $T$ as $T^a:=\frac12(T-T^t)$. We consider the skew-symmetric part of \eqref{maineq}:
\begin{align}\label{anti-sym eqmain}
    0=2(n-2r)T^a-(TS-ST^t)-(n-1)(ST-T^tS)-(n-2)S\theta\wedge\theta.
\end{align}

\begin{lemma}
Under the standard identification between 2-forms and skew-symmetric endomorphisms, we have:  $T^a=\frac12\text{d}\theta$
\end{lemma}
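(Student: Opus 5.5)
The lemma is a direct consequence of the torsion-freeness of $\nabla$. I will compute both sides on a pair of vector fields $X,Y$ and compare, after fixing the convention for the identification between 2-forms and skew-symmetric endomorphisms. The natural convention, consistent with the paper's use of $g(TX,Y)=T(X,Y)$, associates to a 2-form $\omega$ the endomorphism $A$ defined by $g(AX,Y)=\omega(X,Y)$. Under this convention $X\wedge Y$ corresponds to $Z\mapsto\langle X,Z\rangle Y-\langle Y,Z\rangle X$, up to the sign convention, which I will check against the term $S\theta\wedge\theta$ in \eqref{anti-sym eqmain}.

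\textbf{Step 1.} Write the skew part explicitly:
\[
g(T^aX,Y)=\tfrac12\bigl(g(TX,Y)-g(T^tX,Y)\bigr)=\tfrac12\bigl(T(X,Y)-T(Y,X)\bigr)=\tfrac12\bigl((\nabla_X\theta)(Y)-(\nabla_Y\theta)(X)\bigr).
\]

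\textbf{Step 2.} Use the standard formula for the exterior derivative in terms of a torsion-free connection. Expanding $(\nabla_X\theta)(Y)=X(\theta(Y))-\theta(\nabla_XY)$ and subtracting the same expression with $X$ and $Y$ exchanged gives
\[
(\nabla_X\theta)(Y)-(\nabla_Y\theta)(X)=X(\theta(Y))-Y(\theta(X))-\theta(\nabla_XY-\nabla_YX).
\]
Since $\nabla_XY-\nabla_YX=[X,Y]$, the right-hand side equals $\text{d}\theta(X,Y)$ with the usual convention for $\text{d}$ on 1-forms. Hence $g(T^aX,Y)=\tfrac12\text{d}\theta(X,Y)$, which is the claim.

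The only delicate point is bookkeeping of conventions: whether $\text{d}\theta(X,Y)$ carries a factor $\tfrac12$, and which sign the paper attaches to the endomorphism associated with a 2-form. I will fix these so that they agree with how $\theta\wedge S\theta$ appears in the skew part of \eqref{maineq}. A cross-check is that the skew part of $\theta\otimes S\theta+(n-1)S\theta\otimes\theta-(n-2r)\theta\otimes\theta$ reduces to $-(n-2)S\theta\wedge\theta$ in \eqref{anti-sym eqmain}; this pins down the sign of the identification. No other obstacle is expected.
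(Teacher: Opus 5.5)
Your proposal is correct and is essentially the paper's proof: you write $\langle T^aX,Y\rangle=\tfrac12\bigl((\nabla_X\theta)(Y)-(\nabla_Y\theta)(X)\bigr)$, expand each covariant derivative and use torsion-freeness to obtain $\tfrac12\text{d}\theta(X,Y)$, with the same conventions as the paper, namely $g(AX,Y)=\omega(X,Y)$ and $\text{d}\theta(X,Y)=X(\theta(Y))-Y(\theta(X))-\theta([X,Y])$. Your sign cross-check against the $S\theta\wedge\theta$ term is consistent with the paper but not needed, since the lemma is only used later to deduce $\text{d}\theta=0$ from $T^a=0$.
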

\textbf{Proof.}
\begin{align*}
\langle T^aX,Y\rangle&=\frac12(\langle TX,Y\rangle-\langle T^tX,Y\rangle)=\frac12(\langle\nabla_X\theta,Y\rangle-\langle X,\nabla_Y\theta\rangle)\\
&=\frac12[(X(\theta(Y))-\theta(\nabla_XY))-(Y(\theta(X))-\theta(\nabla_YX))]\\
&=\frac12(X(\theta(Y))-Y(\theta(X))-\theta([X,Y]))\\
&=\frac12\text{d}\theta(X,Y).
\end{align*}\qed

We introduce another decomposition of endomorphism of $\text{T}M$, into $S$-commuting part and $S$-anti commuting part as follows. For every $U\in\text{End}(\text{T}M)$, we write:
\[
U=\frac12(U+SUS)+\frac12(U-SUS).
\]
Since $S^2=\text{Id}$, the first term on the right hand side commutes with $S$, and the second term anti-commutes with $S$. If an endomorphism $U$ both commutes and anti-commutes with $S$, then $US=SU=-US$, whence $US=0$, so $U=0$ as $S$ is invertible. So the decomposition is unique.

We consider the $S$-(anti)commuting decomposition of the \eqref{anti-sym eqmain}. We separately calculate the decomposition of the first three terms in \eqref{anti-sym eqmain}, where the first terms commutes with $S$ and second terms anti-commutes with $S$:
\[
2T^a=(T^a+ST^aS)+(T^a-ST^aS)
\]
\[
TS-ST^t=(T^a+T^s)S-S(T^s-T^a)=(T^aS+ST^a)+(T^sS-ST^s)
\]
\[
ST-T^tS=S(T^a+T^s)-(T^s-T^a)S=(ST^a+T^aS)-(T^sS-ST^s)
\]
The fourth term of \eqref{anti-sym eqmain} is $S$-anti-commuting. Indeed:
\begin{align*}
    (S\theta\wedge\theta)(SX)&=\langle S\theta,SX\rangle\theta-\langle \theta,SX\rangle S\theta=\langle\theta,X\rangle \theta-\langle \theta, SX\rangle S\theta\\
    S((S\theta\wedge\theta)X)&=S(\langle S\theta,X\rangle \theta-\langle \theta,X\rangle S\theta)=\langle S\theta,X\rangle S\theta-\langle \theta,X\rangle\theta,
\end{align*}
so its $S$-commuting part is zero. 

 Consequently the $S$-anti-commuting part of \eqref{maineq} is 
\begin{equation}\label{acpart1}
0=(n-2r)(T^a-ST^aS)+(n-2)(T^sS-ST^s)-(n-2)S\theta\wedge \theta.
\end{equation}
On the other hand, the composition of \eqref{maineq} with $S$ is: 
\begin{equation}\label{Smaineq}
\begin{aligned}
\kappa[(n-2r)S-n\text{Id}]=&(n-2r)ST-STS+\delta(\theta)\text{Id}+\text{tr}(ST)S\\
&-(n-1)T+S\theta\otimes S\theta-(\|\theta_1\|^2-\|\theta_2\|^2)S\\
&+(n-1)\theta\otimes \theta-(n-2r)S\theta\otimes\theta\\
&-(n-1)\|\theta\|^2\text{Id}+(n-2r)\|\theta\|^2S.
\end{aligned}
\end{equation}
We will use the same decomposition as above for this equation.
Its skew-symmetric part is: 
\[
0=(n-2r)(ST-T^tS)-S(T-T^t)S-(n-1)(T-T^t)-(n-2r)\theta\wedge S\theta.
\]
And the $S$-anti-commuting part is: 
\begin{equation}\label{acpart2}
0=(n-2r)(ST^s-T^sS)-(n-2)(T^a-ST^aS)-(n-2r)\theta\wedge S\theta.
\end{equation}
By combining Equations \eqref{acpart1} and \eqref{acpart2}, 
we define $P=ST^s-T^sS-\theta\wedge S\theta$ and $Q=ST^aS-T^a$, and obtain:
\[
\begin{cases}
    (n-2)P+(n-2r)Q=0\\
    (n-2r)P+(n-2)Q=0.
\end{cases}
\]

Since we have already assumed $r\ge 2$, the above system admits a unique solution, namely $P=Q=0$.

On the other hand, the $S$-commuting part of \eqref{anti-sym eqmain} is 
\begin{equation}\label{1}
0=(n-2r)(T^a+ST^aS)-n(ST^a+T^aS).
\end{equation}
We compose both sides of this identity on the left with $S$, and obtain
\begin{equation}\label{2}
0=(n-2r)(ST^a+T^aS)-n(T^a+ST^aS).
\end{equation}
By combining \eqref{1} and \eqref{2}, we deduce $ST^a+T^aS=0$.

Since $Q=ST^aS-T^a=0$, we also have $ST^a-T^aS=0$, thus $\text{d}\theta=2T^a=0$.

If the manifold $M$ is simply connected, every closed 1-form $\theta$ is exact, so we can choose a metric $g'$ conformal to $g$ such that the Weyl connection $D$ is Levi-Civita connection of $g'$. Since $M$ is complete, it can be decomposed into a Riemannian product by the decomposition theorem of de Rham. On the other hand, any simply connected manifold with constant positive sectional curvature is diffeomorphic to the sphere $S^n$ (see Section 12.3 of \cite{petersenriemanniangeometry}).
\begin{lemma}
     The topological manifold $S^n$ cannot be decomposed into a product of two topological manifolds of positive dimension.
\end{lemma}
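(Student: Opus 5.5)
The plan is to argue by contradiction with homology and the Künneth formula. Suppose $S^n\cong A\times B$ (homeomorphism), where $A$ and $B$ are topological manifolds of dimensions $p,q\ge1$ with $p+q=n$. Since $S^n$ is compact and connected, and the projections $A\times B\to A$ and $A\times B\to B$ are continuous surjections, both $A$ and $B$ are compact and connected. Hence they are closed manifolds.

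The key steps, in order:

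\textbf{Step 1 (orientability).} A product of manifolds is orientable if and only if each factor is. Rather than rely on that, I will work with $\mathbb{Z}/2$ coefficients, where every closed connected manifold $N$ of dimension $m$ satisfies $H_m(N;\mathbb{Z}/2)\cong\mathbb{Z}/2$ and $H_0(N;\mathbb{Z}/2)\cong\mathbb{Z}/2$.

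\textbf{Step 2 (Künneth).} Over the field $\mathbb{Z}/2$ the Künneth formula gives
\[
H_k(A\times B;\mathbb{Z}/2)\cong\bigoplus_{i+j=k}H_i(A;\mathbb{Z}/2)\otimes H_j(B;\mathbb{Z}/2).
\]
For $k=p$ this contains the summand $H_p(A;\mathbb{Z}/2)\otimes H_0(B;\mathbb{Z}/2)\cong\mathbb{Z}/2$. Since $1\le p\le n-1$, this forces $H_p(A\times B;\mathbb{Z}/2)\neq0$.

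\textbf{Step 3 (contradiction).} On the other hand, $H_p(S^n;\mathbb{Z}/2)=0$ for $0<p<n$. This contradicts Step 2, since homology is a homotopy invariant and therefore a homeomorphism invariant.

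The only step needing care is the top-dimensional fact in Step 1 for closed topological (not necessarily smooth) manifolds: $H_m(N;\mathbb{Z}/2)\cong\mathbb{Z}/2$ for $N$ closed and connected. This is the standard existence of the $\mathbb{Z}/2$ fundamental class (e.g. Hatcher, Theorem 3.26). I expect citing it, rather than proving it, to be acceptable here.

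Alternatively, the argument can be phrased with Euler characteristics or cohomology rings, but the above is the most direct. Note that in the paper's application the factors arise from de Rham's decomposition and are smooth, so even the smooth version suffices.
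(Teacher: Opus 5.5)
Your proposal is correct and follows essentially the same route as the paper: both factors are compact and connected, and with $\mathbb{Z}_2$ coefficients the K\"unneth summand $H_p(A;\mathbb{Z}_2)\otimes H_0(B;\mathbb{Z}_2)\cong\mathbb{Z}_2$ forces $H_p(S^n;\mathbb{Z}_2)\neq 0$ for some $0<p<n$, which is a contradiction. Your explicit appeal to the existence of a $\mathbb{Z}_2$ fundamental class for closed topological manifolds justifies the one step that the paper states without justification.
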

\textbf{Proof.}
Assume that there exist two manifolds $M$ and $N$ such that $S^n=M\times N$, with $p:=\text{dim}M\ge 1$ and $q:=\text{dim}N\ge 1$. By the K\"{u}nneth formula, the $k-$th homology of $S^n$ can be decomposed as:
\[
H_k(S^n,\mathbb{Z}_2)=H_k(M\times N,\mathbb{Z}_2)\simeq \bigoplus_{i+j=k}H_i(M,\mathbb{Z}_2)\otimes H_j(N,\mathbb{Z}_2).
\]
Since \(S^n=M\times N\) is compact and connected, both \(M\) and \(N\) are compact and connected. Thus $H_p(M,\mathbb{Z}_2)=\mathbb{Z}_2$, and $H_0(N,\mathbb{Z}_2)=\mathbb{Z}_2$. Thus $H_p(M;\mathbb Z_2)\otimes H_0(N;\mathbb Z_2)\cong \mathbb Z_2$ is a non-zero direct summand of \(H_p(M\times N;\mathbb Z_2)\). Hence $H_p(M\times N;\mathbb Z_2)\neq0$. But we know the singular homology of $S^n$ is :
\begin{equation}
H_k(S^n,\mathbb{Z}_2)=
\begin{cases}
    \mathbb{Z}_2,\ \ k=0,n\\
    0,\ \ \text{otherwise}.
\end{cases}
\end{equation}
So $p=0$ or $p=n$, which contradicts the assumption $p\ge 1$ and $q=n-p\ge 1$. This proves the lemma. \qed

If $(M,g)$ is not simply connected, we set $(\tilde{M},\tilde{g})$ its universal cover manifold, with Riemannian metric $\tilde{g}=\pi^*g$. Therefore its sectional curvature is the constant $\kappa>0$. Since $M$ is compact, $(M,g)$ is complete, so $(\tilde{M},\tilde{g})$ is complete as well (cf. \cite{KN1} Ch IV, Thm 4.6(2)).  Then by Myers' theorem, we know that $\tilde{M}$ is compact. If there is a conformal product structure on $M$, it can be pulled back to $\tilde{M}$ and thereby reducing to the simply connected case. This concludes the proof of the case $\kappa>0$.

\textbf{Case ii)} $\kappa\le 0$.
Assume that $M$ admits a conformal product structure: $\text{T}M=T_1\oplus T_2$, with $\text{dim}T_1=n-r$, $\text{dim}T_2=r$. Let $\theta_1=\theta|_{T_1}$ and $\theta_2=\theta|_{T_2}$.

Using \eqref{curvature}, we obtain
\begin{equation}
\begin{aligned}
(R_{X,Y}S)Z&=R_{X,Y}(SZ)-S(R_{X,Y}Z)\\
   &=-\kappa[\iota_{S(Z)}X\wedge Y+S(\iota_ZX\wedge Y)]\\
   &=\kappa[-\langle SZ,X\rangle Y+X\langle SZ,Y\rangle+\langle Z,X\rangle SY-\langle Z,Y\rangle SX].
\end{aligned}
\end{equation}
Taking $Y=Z=e_i$, summing over a local orthonormal basis $\{e_i\}$.
\[
\sum_{i=1}^n(R_{X,e_i}S)e_i=\kappa[(n-2r)X-nSX]
\]
Combining this with \eqref{centre}, we obtain:
\begin{equation}
    \begin{aligned}
        \kappa[(n-2r)X-nSX]=&(n-2r)TX-TSX+(\delta\theta)SX+tr(ST)X\\
    &-(n-1)STX+\theta(SX)\theta-(\|\theta_1\|^2-\|\theta_2\|^2)X\\
    &+(n-1)\theta(X)S\theta-(n-2r)\theta(X)\theta\\
    &-(n-1)\|\theta\|^2SX+\|\theta\|^2(n-2r)X.
    \end{aligned}
\end{equation}
Taking the scalar product with $SX$ in this formula, and summing over a local orthonormal basis $X=e_i$ yields:
\begin{equation}\label{sumRS}
\begin{aligned}
    \kappa[(n-2r)^2-n^2]=&(n-2r)tr(ST)-trT+n\delta\theta+(n-2r)tr(ST)-(n-1)trT\\
    &+\|\theta
\|^2-(n-2r)(\|\theta_1\|^2-\|\theta_2\|^2)+(n-1)\|\theta\|^2\\
&-(n-2r)(\|\theta_1\|^2-\|\theta_2\|^2)-n(n-1)\|\theta\|^2+(n-2r)^2\|\theta\|^2.
\end{aligned}
\end{equation}
With Lemma 4.2 of \cite{MoroianuPilca2025}, we have the expression of $tr(ST)$:
\begin{equation}
tr(ST)=-\delta(S\theta)-trS\|\theta\|^2+n\langle\theta,S\theta\rangle.
\end{equation}
We substitute this expression of $\text{tr}(ST)$ into the above equation, and obtain:
\begin{align*}
    \kappa[-4nr+4r^2]&=2(n-2r)[-\delta(S\theta)-(n-2r)\|\theta\|^2+n(\|\theta_1\|^2-\|\theta_2\|^2)]\\&+(n+1)\delta\theta
    +(n-1)\delta\theta+\|\theta\|^2-(2n-4r)(\|\theta_1\|^2-\|\theta_2\|^2)\\
    &-(n-1)^2\|\theta\|^2+(n-2r)^2\|\theta\|^2.
\end{align*}
 By substituting $\|\theta\|^2 = \|\theta_1\|^2 + \|\theta_2\|^2$, the non-divergence terms simplify significantly: 
 \begin{equation}\label{H}
 -4\kappa r(n-r) = 2n\delta\theta - 2(n-2r)\delta(S\theta) - 4r(r-1)\|\theta_1\|^2 - 4(n-r)(n-r-1)\|\theta_2\|^2 
 \end{equation}
 Dividing both sides by $-4$, we obtain the pointwise algebraic identity on $M$:
 \[
 \kappa r(n-r)=-\frac{n}{2}\delta\theta + \frac{n-2r}{2}\delta(S\theta)+r(r-1)\|\theta_1\|^2 + (n-r)(n-r-1)\|\theta_2\|^2
 \]

By integrating this equation over the compact manifold $M$ against the Riemannian volume form $\text{dvol}_g$, the integrals of the divergence terms $\delta(S\theta)$ and $\delta\theta$ vanish by Stokes' Theorem, we obtain:
\begin{equation}\label{int result}
    (r-1)r\int_M\|\theta_1\|^2\text{dvol}_g+(n-r)(n-r-1)\int_M\|\theta_2\|^2\text{dvol}_g=\kappa r(n-r)\text{Vol}(M).
\end{equation}

We have already assumed that the curvature $\kappa$ of $M$ is nonpositive, so if $\kappa<0$, this yields a contradiction, and if $\kappa=0$, there are two cases:

\textbf{i)} If $r\ge 2$, \eqref{int result} leads to $\theta=0$. 

\textbf{ii)} 
If \(r=1\), let \(\xi\) be a unit section of the rank-one distribution \(T_2\)
after passing to a double cover if necessary, and write $\theta=\theta_0+a\xi^\flat$, where \(\theta_0\) is the component of \(\theta\) along \(\xi^\perp=T_1\) and \(a=\theta(\xi)\). From \eqref{int result} we get \(\theta_2=0\), hence \(a=0\). Therefore \(\xi(a)=0\). Applying \eqref{theta0} with \(\kappa=0\), we obtain
\[
\frac12\theta_0(\|\theta_0\|^2)=\|\theta_0\|^4.
\]
At a maximum point of \(\|\theta_0\|^2\), the left-hand side vanishes, and hence
\(\|\theta_0\|=0\) there. Thus \(\theta_0\equiv0\), and consequently
\(\theta=0\).
\qed

Combining Proposition \ref{prop:surface-case} with Theorem \ref{3.1}, we obtain Theorem \ref{thm1}.  

\subsection{Irreducible locally symmetric spaces}
We now address the second main result of this paper, Theorem \ref{thm2}. In what follows, we demonstrate that the methods used above for constant non-positive curvature can be adapted to irreducible compact locally symmetric spaces.

To prove Theorem \ref{thm2}, we need some classical results about the symmetric spaces (cf. Ch XI of \cite{KN2}):
\begin{proposition}\label{sym} Let $M$ be a Riemannian symmetric space, $G=\text{Isom}^\circ(M)$ the identity component of its isometry group, $\mathfrak{g}$ its Lie algebra and $B$ its Killing form, $o\in M$ and $\mathfrak{g}=\mathfrak{p}\oplus\mathfrak{k}$ the associated Cartan decomposition. Let $\phi_o : G \to M$ be the orbit map defined by $\phi_o(g) = g.o$, whose differential at the identity $e\in G$, $d_e\phi_o : \mathfrak{g} \to T_oM$, restricts to a linear isomorphism from $\mathfrak{p}$ onto $T_oM$. For $X,Y,Z\in\mathfrak{p}$, we set $u=d_e\phi_o(X),v=d_e\phi_o(Y)$ and $w=d_e\phi_o(Z)\in T_oM$. We then have:
\begin{equation}
R_o(u,v)w=-d_e\phi_o([[X,Y],Z]).
\end{equation}
\end{proposition}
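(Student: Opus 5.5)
The proof identifies elements of $\mathfrak g$ with Killing fields and computes the curvature at $o$ through the Killing-field identity. This is the classical argument of Ch.~XI of \cite{KN2}. For $X\in\mathfrak g$, let $X^*$ be the Killing field $X^*_x=\frac{d}{dt}\big|_{t=0}\exp(tX).x$, so that $X^*_o=d_e\phi_o(X)$. I will use three standard facts.

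\begin{enumerate}
\item[(a)] The map $X\mapsto X^*$ is a Lie algebra anti-homomorphism: $[X^*,Y^*]=-[X,Y]^*$.
\item[(b)] Every Killing field $K$ satisfies $\nabla^2_{u,v}K=R(u,K)v$.
\item[(c)] If $X\in\mathfrak p$, then $(\nabla X^*)_o=0$.
\end{enumerate}

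For (c), let $s_o$ be the geodesic symmetry at $o$ and $\sigma$ the associated involution of $\mathfrak g$. Then $s_o$ pushes $X^*$ forward to $(\sigma X)^*=-X^*$. Since $s_o$ is an isometry fixing $o$ with $d s_o=-\mathrm{Id}$ on $T_oM$, the endomorphism $A=(\nabla X^*)_o$ satisfies $A=(-\mathrm{Id})A(-\mathrm{Id})^{-1}=-A$, so $A=0$.

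Now take $X,Y,Z\in\mathfrak p$ and set $V:=[X,Y]^*=-[X^*,Y^*]$. Since $[X,Y]\in\mathfrak k$ fixes $o$, we have $V_o=0$. Next compute $(\nabla V)_o$ on $w$. Write $[X^*,Y^*]=\nabla_{X^*}Y^*-\nabla_{Y^*}X^*$ and differentiate in the direction $w$. The terms $\nabla_{\nabla_wX^*}Y^*$ and $\nabla_{\nabla_wY^*}X^*$ vanish at $o$ by (c). Applying (b) then gives
\[
(\nabla_w[X^*,Y^*])_o=R(w,v)u-R(w,u)v=R(u,v)w,
\]
where the last equality is the first Bianchi identity. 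Hence $(\nabla V)_o w=-R(u,v)w$.

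Then evaluate the bracket $[V,Z^*]$ at $o$. Because $\nabla$ is torsion-free and $V_o=0$,
\[
[V,Z^*]_o=(\nabla_VZ^*)_o-(\nabla_{Z^*}V)_o=-(\nabla V)_o w=R(u,v)w .
\]
On the other hand, (a) gives $[V,Z^*]=[[X,Y]^*,Z^*]=-[[X,Y],Z]^*$. Evaluating at $o$ yields $R_o(u,v)w=-d_e\phi_o([[X,Y],Z])$, which is the claimed formula.

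The main obstacle is bookkeeping of signs. The sign in (a) depends on whether $G$ acts on the left and how $X^*$ is defined. The sign in (b) depends on the curvature convention used in the paper, $R_{X,Y}=[\nabla_X,\nabla_Y]-\nabla_{[X,Y]}$. I will verify (b) directly from the Killing equation together with the Bianchi identity, and (a) from $\exp(tX)\exp(sY)$, before assembling the computation. Everything else is routine.
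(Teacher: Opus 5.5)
Your proof is correct, and the signs all check out. For a left action, $X\mapsto X^*$ is an anti-homomorphism. With the paper's convention $R_{X,Y}=[\nabla_X,\nabla_Y]-\nabla_{[X,Y]}$, every Killing field satisfies $\nabla^2_{u,v}K=R(u,K)v$. The identity $R(w,v)u-R(w,u)v=R(u,v)w$ is exactly the first Bianchi identity. A quick test on $S^2=SO(3)/SO(2)$ also gives $\langle R(u,v)v,u\rangle=+1$, which confirms the overall sign.

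The paper gives no proof of this proposition; it quotes it from Ch.~XI of \cite{KN2}. There the formula is a special case of the theory of invariant connections on reductive homogeneous spaces. The canonical connection has curvature $R(X,Y)_o=-\operatorname{ad}([X,Y]_{\mathfrak k})$ on $\mathfrak p$ and torsion given by the $\mathfrak p$-component of $[X,Y]$. On a symmetric space $[\mathfrak p,\mathfrak p]\subset\mathfrak k$, so the torsion vanishes and the canonical connection coincides with the Levi-Civita connection.

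Your route stays within Riemannian geometry. It uses only the second-derivative identity for Killing fields, the geodesic symmetry (to get $(\nabla X^*)_o=0$ for $X\in\mathfrak p$), and torsion-freeness to evaluate $[V,Z^*]_o$ when $V_o=0$. This is more elementary and self-contained, at the price of the sign bookkeeping you flagged. The Kobayashi--Nomizu approach needs more machinery, but it identifies the canonical connection with $\nabla^g$ and handles all invariant connections at once.

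One presentational point in (c): the chain $A=(-\mathrm{Id})A(-\mathrm{Id})^{-1}=-A$ conflates two facts. The first equality is trivial. The second comes from naturality of $\nabla$ under the isometry $s_o$ together with $(s_o)_*X^*=-X^*$. It is cleaner to write $ds_o\circ A\circ ds_o^{-1}=(\nabla (s_o)_*X^*)_o=-A$ and then use $ds_o=-\mathrm{Id}$.
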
 

\textbf{Remark.} If $M$ is irreducible, then there is $\alpha\in\mathbb{R}$ such that $B(X,Y)=\alpha\langle X,Y\rangle$ for all $X,Y\in\mathfrak{p}$. According to the classification of irreducible Riemannian symmetric spaces, $(M, g)$ falls into one of two categories: compact type ($\alpha < 0$), non-compact type ($\alpha > 0$). If $(u,v)$ is an orthonormal basis of a plane $P\subset T_oM$, $K(P)$ is its sectional curvature, and $X,Y\in \mathfrak{p}$ are such that $d_e\phi_o(X)=u$ and $d_e\phi_o(Y)=v$, then:
\begin{equation}
\begin{aligned}
 K(P)&=R_o(u,v,v,u)=\langle R_o(u,v)v|u\rangle\\
 &=\langle -[[X,Y],Y]|X\rangle=\frac{1}{\alpha}B([Y,[X,Y]],X)\\
 &=\frac{1}{\alpha}B([X,Y],[X,Y]).
\end{aligned}
\end{equation}

Under the assumptions of Theorem \ref{thm2}, let $(M, g)$ be a compact irreducible locally symmetric space with non-positive sectional curvature. Since the Euclidean type is excluded, its universal cover $\tilde{M}$ must be an irreducible global Riemannian symmetric space of non-compact type (i.e., $\alpha >0$). The local calculations of Riemannian curvature tensor can be lifted to $\tilde{M}$, and in this case, the Killing form $B$ is positive definite on $\mathfrak{p}$ and negative definite on $\mathfrak{k}$.

\textbf{Proof of Theorem \ref{thm2}.} 
For an arbitrary point $o\in M$, choose a lift \(\tilde o\in\widetilde M\). For the following pointwise curvature computation, we lift all objects to the universal cover \(\widetilde M\), which is a globally symmetric space of non-compact type, and omit the tildes from the notation. Since the quantities involved are pullbacks of tensors on \(M\), the resulting value of \(H(o)\) is independent of the chosen lift \(\tilde o\). Let $\mathfrak{g}=\mathfrak{p}\oplus\mathfrak{k}$ be the Cartan decomposition at \(\tilde o\), and identify \(\mathfrak p\) with \(T_{\tilde o}\widetilde M\) via the differential of the orbit map. Choose an orthonormal basis \(e_1,\ldots,e_n\) of \(T_{\tilde o}\widetilde M\), adapted to the lifted splitting. Via the
identification \(\mathfrak p\simeq T_{\tilde o}\widetilde M\), we use the same symbols \(e_i\) for the corresponding elements of \(\mathfrak p\).


As in the proof of the non-positive constant sectional curvature case, we denote $H(o):=\sum_{1\le i,j\le n}\langle R_{e_j,e_i}(S)e_i,Se_j\rangle$. We will compute $H(o)$ by two methods: one using Proposition \ref{sym} and on the other hand, using Equation \eqref{RS}. Note that the expression $H(o)=\sum_{1\le i,j\le n}\langle R_{e_j,e_i}(S)e_i,Se_j\rangle$ does not depend on the choice of the orthonormal basis $\{e_i\}$, so $H$ is a well-defined function on $M$.

In order to simplify the computation, we assume that $T_1$ is generated by $e_i$ for $i=1,\dots, n-r$ and $T_2$ is generated by $e_i$ for $i=n-r+1,\dots,n$. So $Se_i=e_i$ for $1\le i\le n-r$ and $Se_i=-e_i$ for $n-r+1\le i\le n$. For \(A\in\mathfrak k\), we set $
\|A\|_{\mathfrak k}^2:=-B(A,A)$. We omit the point $(o)$ where the computation is done. In the following computation, we utilize the Jacobi identity for the term $[[e_j, e_i], Se_i]$. Since $e_i$ is an eigenvector of $S$, $e_i$ and $Se_i$ are collinear, yielding $[e_i, Se_i] = 0$. Combined with the ad-invariance of the Killing form $B([X, Y], Z) = -B(Y, [X, Z])$, we obtain:
\begin{align*}
    H&=\sum_{1\le i,j\le n}\langle R_{e_j,e_i}(S)e_i,Se_j\rangle\\
    &=\sum_{1\le i,j\le n}\langle R_{e_j,e_i}(Se_i),Se_j\rangle-\langle SR_{e_j,e_i}e_i,Se_j\rangle\\
    &=\sum_{1\le i,j\le n}\langle-[[e_j,e_i],Se_i],Se_j\rangle-\langle-S([[e_j,e_i],e_i]),Se_j\rangle \\
    &=\sum_{1\le i,j\le n}-\frac1\alpha B([[e_j,e_i],Se_i],Se_j)+\frac1\alpha B([[e_j,e_i],e_i],e_j) \\
    &=\sum_{1\le i,j\le n} \frac{1}{\alpha}B([[Se_i, e_j], e_i], Se_j) + \frac{1}{\alpha}B([[e_j,e_i],e_i],e_j)\\
    &=\sum_{1\le i,j\le n}\frac1\alpha B([Se_i,e_j],[e_i,Se_j])-\frac1\alpha B([e_i,e_j],[e_i,e_j])\\
    &=-\frac{4}{\alpha}\sum_{n-r+1\le i\le n}\sum_{1\le j\le n-r}B([e_i,e_j],[e_i,e_j])\\
    &=\frac{4}{\alpha}\sum_{n-r+1\le i\le n}\sum_{1\le j\le n-r}\|[e_i,e_j]\|_\mathfrak{k}^2
\end{align*}

Since the Killing form $B$ is negative definite on $\mathfrak{k}$, we obtain that $H(o)\ge 0$ for $\forall o\in M$.

On the other hand, the same contraction of \eqref{centre} with \(SX\), followed by taking the trace over \(X=e_i\), gives
\[
H=2n\delta\theta-2(n-2r)\delta(S\theta)-4(r-1)r\|\theta_1\|^2-4(n-r)(n-r-1)\|\theta_2\|^2.
\]
So by integration we obtain:
\begin{equation}\label{int result 2}
    (r-1)r\int_M\|\theta_1\|^2\text{dvol}_g+(n-r)(n-r-1)\int_M\|\theta_2\|^2\text{dvol}_g=-\frac14\int_M H\text{dvol}_g.
\end{equation}
The left-hand side is non-negative and the right-hand side is non-positive, hence both sides must be zero. Since \(H\ge0\) pointwise, we also get \(H\equiv0\). The proof is decomposed into two cases:

\textbf{i):} If $r\ge 2$, then $\|\theta_1\|=\|\theta_2\|=0$, so $\theta=0$ on $M$. Thus $D$ is the Levi-Civita connection of $g$, and the lifted splitting of $\text{T}M$ on the universal cover is parallel. By the de Rham decomposition theorem, \(\widetilde M\) splits as a non-trivial Riemannian product, contradicting the irreducibility assumption.

\textbf{ii):} If \(r=1\), let \(e_1,\dots,e_{n-1}\) span \(T_1\) and let \(e_n\) span \(T_2\). Since \(H\equiv0\) on \(M\), we have $\|[e_i,e_n]\|_\mathfrak{k}=0$ for $i=1,2,\dots,n-1$. So $e_n\in Z_\mathfrak{p}(\mathfrak{p}):=\{v\in\mathfrak{p}|\forall w\in\mathfrak{p},[v,w]=0\}$, in particular $Z_\mathfrak{p}(\mathfrak{p})\neq 0$. Moreover, \(Z_{\mathfrak p}(\mathfrak p)\) is invariant under \(\operatorname{ad}(\mathfrak k)\). Indeed, if \(z\in Z_{\mathfrak p}(\mathfrak p)\), \(A\in\mathfrak k\), and \(w\in\mathfrak p\), then the Jacobi identity gives
\[
[[A,z],w]=[A,[z,w]]-[z,[A,w]]=0,
\]
because \([z,w]=0\) and \([A,w]\in\mathfrak p\). Hence \([A,z]\in Z_{\mathfrak p}(\mathfrak p)\). Since \(Z_{\mathfrak p}(\mathfrak p)\) commutes with \(\mathfrak p\) by definition, it is a non-zero abelian ideal of \(\mathfrak g\). This contradicts the semisimplicity of the transvection algebra of a symmetric space of non-compact type.
\qed

\printbibliography[title={References}]

\end{document}